\numberwithin{equation}{section}
\newtheorem{Theorem}{Theorem}[section]
\newtheorem*{Theorem*}{Theorem}
\newtheorem{Lemma}[Theorem]{Lemma}
\newtheorem{Proposition}[Theorem]{Proposition}
\newtheorem{question}[Theorem]{Question}
\newtheorem{Conjecture}[Theorem]{Conjecture}
 { \theoremstyle{definition}
\newtheorem{Definition}[Theorem]{Definition}
\newtheorem{Example}[Theorem]{Example}
\newtheorem{Remark}[Theorem]{Remark}
\newtheorem{MyParagraph}{}[subsection]
}
\newcommand{\includepic}[1]{\vcenter{\hbox{\includegraphics[scale=1.1]{#1}}}}
\renewcommand{\hat}[1]{\widehat{#1}}
\newcommand{\hhat}[1]{\hat{\vphantom{\rule{1pt}{5.5pt}}\smash{\hat{#1}}}}
\renewcommand{\vec}[1]{\bm{#1}}
\renewcommand{\tilde}[1]{\widetilde{#1}}
\newcommand{\cat}[1]{\mathsf{#1}}
\newcommand{\lie}[1]{\mathfrak{#1}}
\newcommand{\bC}{\mathbb{C}}
\newcommand{\bP}{\mathbb{P}}
\newcommand{\bQ}{\mathbb{Q}}
\newcommand{\bZ}{\mathbb{Z}}
\newcommand{\bk}{\mathbbm{k}}
\newcommand{\cE}{\mathcal{E}}
\newcommand{\cF}{\mathcal{F}}
\newcommand{\cK}{\mathcal{K}}
\newcommand{\cM}{\mathcal{M}}
\newcommand{\cN}{\mathcal{N}}
\newcommand{\cO}{\mathcal{O}}
\newcommand{\cT}{\mathcal{T}}
\newcommand{\fq}{\mathfrak{q}}
\newcommand{\kQ}{Q}
\newcommand{\sA}{\mathsf{A}}
\newcommand{\sD}{\mathsf{D}}
\newcommand{\sF}{\mathsf{F}}
\newcommand{\sM}{\mathsf{M}}
\newcommand{\sT}{\mathsf{T}}
\newcommand{\sZ}{\mathsf{Z}}
\newcommand{\scA}{\mathscr{A}}
\newcommand{\sR}{\mathsf{R}}
\newcommand{\sS}{\mathsf{S}}
\newcommand{\sRR}{\mathsf{RR}}
\newcommand{\di}{\partial}
\newcommand{\pt}{\mathrm{pt}}
\newcommand{\vir}{\mathrm{vir}}
\newcommand{\loc}{\mathrm{loc}}
\newcommand{\chiq}{\chi_{\mathrm{q}}}
\newcommand{\chiqq}{\chi_{\mathrm{qq}}}
\DeclareMathOperator{\GL}{GL}
\DeclareMathOperator{\SU}{SU}
\DeclareMathOperator{\sign}{sign}
\DeclareMathOperator{\Hilb}{Hilb}
\DeclareMathOperator{\Taut}{\mathcal{T}\mkern-1.5mu{\mathit{aut}}}
\DeclareMathOperator{\End}{End}
\DeclareMathOperator{\tr}{tr}
\DeclareMathOperator{\PT}{PT}
\DeclareMathOperator{\Chow}{Chow}
\DeclareMathOperator{\Hom}{Hom}
\DeclareMathOperator{\Aut}{Aut}
\DeclareMathOperator{\Sym}{Sym}
\DeclareMathOperator{\Ext}{Ext}
\DeclareMathOperator{\Stab}{Stab}
\DeclareMathOperator{\Spec}{Spec}
\DeclarePairedDelimiter{\inner}{\langle}{\rangle}
\begin{document}
\allowdisplaybreaks

\newcommand{\arXivNumber}{2203.07072}

\renewcommand{\PaperNumber}{090}

\FirstPageHeading

\ShortArticleName{A Representation-Theoretic Approach to $qq$-Characters}

\ArticleName{A Representation-Theoretic Approach\\ to $\boldsymbol{qq}$-Characters}

\Author{Henry LIU}

\AuthorNameForHeading{H.~Liu}

\Address{Mathematical Institute, University of Oxford, Andrew Wiles Building,\\ Radcliffe Observatory Quarter, Woodstock Road, Oxford, OX26GG, UK}
\Email{\href{mailto:liu@maths.ox.ac.uk}{liu@maths.ox.ac.uk}}
\URLaddress{\url{http://people.maths.ox.ac.uk/liu/}}

\ArticleDates{Received May 15, 2022, in final form November 17, 2022; Published online November 24, 2022}

\Abstract{We raise the question of whether (a slightly generalized notion of) $qq$-characters can be constructed purely representation-theoretically. In the main example of the quantum toroidal $\mathfrak{gl}_1$ algebra, geometric engineering of adjoint matter produces an explicit vertex operator $\mathsf{RR}$ which computes certain $qq$-characters, namely Hirzebruch $\chi_y$-genera, completely analogously to how the R-matrix $\mathsf{R}$ computes $q$-characters. We give a geometric proof of the independence of preferred direction for the refined vertex in this and more general non-toric settings.}

\Keywords{$qq$-characters; geometric engineering; vertex operators; R-matrices; Pandha\-ri\-pan\-de--Tho\-mas theory}

\Classification{17B37; 17B67; 14N35}

\section{Introduction}

From a high vantage point, one could say this paper studies the
character theory of representations $V$ of quantum affine (or
affinized) algebras $U_q(\hat{\lie{g}})$. In the pioneering work
\cite{Frenkel1999}, from the R-matrix defining $U_q(\hat{\lie{g}})$,
Frenkel and Reshetikhin constructed the {\it $q$-character}~$\chiq(V)$, a~$q$-analogue of the ordinary character for
representations of classical Lie algebras. Later Nakajima~\cite{Nakajima2001} provided a geometric construction of $\chiq(V)$
using his quiver varieties. More recently, in~\cite{Nekrasov2016},
Nekrasov used an analogous geometric construction to produce a
one-parameter deformation of $\chiq(V)$ called the {\it
 $qq$-character} $\chiqq(V)$. Our main goal will be to bring this
collection of ideas full circle back to representation theory, and
attempt to fill in the remaining cell of the following table:
\[ \begin{array}{c|c|c}
 \text{Construction} & \chiq & \chiqq \\ \hline
 \text{Rep. theory} & \cite{Frenkel1999} & ?? \\
 \text{Geometry} & \cite{Nakajima2001} & \cite{Nekrasov2016}
 \end{array} \]
To this end, Section~\ref{sec:qq-characters} reviews the aforementioned
material, proposes a generalized notion of $qq$-character, and poses a
sequence of questions on how $\chiqq$ might be described purely in
terms of representation theory, namely using only the operators in
$U_q(\hat{\lie{g}})$ (as can be done for $q$-characters).
Schematically, we ask if there is an operator $\sRR_{W,V}$, analogous
to the R-matrix $\sR_{W,V}$, such that
\begin{gather} \label{eq:R-vs-RR-analogy}
 \sR_{W,V}\leadsto \chiq(V), \qquad
\sRR_{W,V}\leadsto \chiqq(V)
\end{gather}
are analogous procedures. We anticipate that such a
definition/description of $\chiqq$ will be useful, among other
applications, for studying the enumerative geometry of curves in
3-folds \cite{Liu2021}.

From a much lower, down-to-earth vantage point, this paper actually
mainly studies {\it geometric engineering} \cite{Iqbal2006, Katz1997},
a procedure by which certain K-theoretic Nekrasov partition functions
can be computed using networks of refined topological vertices
$C_{\lambda\mu\nu}(q,t)$. This is relevant because
\begin{itemize}\itemsep=0pt
\item a specific Nekrasov partition function $Z_r$ (for 5d $\cN=1^*$
 supersymmetric $\SU(r)$ Yang--Mills theory) is the Hirzebruch
 $\chi_y$-genus of the moduli of rank-$r$ instantons, which by our
 definition is a form of $qq$-character for (the $r$-fold tensor
 product of) the Fock module $\sF$ of the quantum toroidal algebra
 $U_{q,t}\big(\hhat{\lie{gl}}_1\big)$;
\item refined vertices have a history of descriptions \cite{Awata2012,
 Iqbal2010} using operators in $U_{q,t}\big(\hhat{\lie{gl}}_1\big)$.
\end{itemize}
Specifically, in Section~\ref{sec:geometric-engineering} we build from
refined vertices an operator $\sRR_{\sF,\sF}$ which produces $Z_r$ in
much the same way that the R-matrix $\sR_{\sF,\sF}$ produces
$\chiq(\sF^{\otimes r})$. In this way, we answer a question from
Section~\ref{sec:qq-characters} for the (important!) example of
$U_{q,t}\big(\hhat{\lie{gl}}_1\big)$. This result can be degenerated to~$U_q\big(\hat{\lie{sl}}_2\big)$.

The main challenge in finding an operator $\sRR_{W,V}$ is that
$\chiqq(V)$ is morally quadratic in~$V$, while $\chiq(V)$ is linear
(see Question~\ref{q:2}). For $U_{q,t}\big(\hhat{\lie{gl}}_1\big)$, the
desired quadratic terms naturally occur in (traces of) the so-called
{\it refined $4$-point function}, see, e.g.,~\cite{Awata2016}. This is our
$\sRR_{\sF,\sF}$. Compositions of these 4-point functions are similar
to but {\it distinct} from higher-rank Carlsson--Nekrasov--Okounkov
Ext operators~\cite{Carlsson2014}, which lack nice closed-form
formulas despite an excellent representation-theoretic
characterization \cite{Negut2017,Negut2020}. In contrast, in
Section~\ref{sec:explicit-operator-formula}, using a standard operator
formalism, we give an explicit formula for $\sRR_{\sF,\sF}$ involving
vertex operators with a~curious interaction of the horizontal and
vertical subalgebras of $U_{q,t}(\hhat{\lie{gl}}_1)$. Similar results
hold for other toroidal algebras if adjoint matter can be engineered.

Of significant independent interest is
Section~\ref{sec:dependence-on-preferred-direction}, where we prove the {\it
 independence of preferred direction}, also known as {\it slicing
 invariance}, of the network of refined vertices which computes
$Z_r$. This is a commonly-assumed property of appropriate networks
\cite{Awata2013, Iqbal2009, Morozov2016}. Recent work~\cite{Arbesfeld2021} of Arbesfeld contains a geometric proof if the
network is the toric skeleton of a smooth toric $3$-fold. We use a
degenerating family of abelian varieties to relax the toric
constraint, to allow for suitable non-toric gluings of edges. This
should cover all networks in current literature for which independence
of preferred direction is expected. The strategy in~\cite{Arbesfeld2021}, and for us as well, is to identify refined
vertices and partition functions as specific limits of equivariant
K-theoretic Pandharipande--Thomas (PT) vertices~\cite{Pandharipande2009} and partition functions. We believe this
geometric approach to be cleaner than the recent purely algebraic
proof of~\cite{Fukuda2020}, despite a dependence on the conjectural
K-theoretic DT/PT correspondence.

In accordance with
the analogy \eqref{eq:R-vs-RR-analogy}, in
Section~\ref{sec:properties-of-RR} we consider $\sRR^{\PT}$, the lift of
$\sRR$ to PT theory, and collect some (conjectural) properties of
$\sRR$ and $\sRR^{\PT}$ which we propose are analogues of certain
properties of the R-matrix $\sR$.

\section[qq-characters]{$\boldsymbol{qq}$-characters}\label{sec:qq-characters}

\subsection{The geometric definition}

\begin{MyParagraph}\label{sec:geometric-realization-modules}
Let $\Gamma$ be a quiver with vertices indexed by a set $I$.
Associated to a doubled and framed version of $\Gamma$ is the Nakajima
quiver variety
\[ X_\Gamma(\vec w) = \bigsqcup_{\vec v} X_\Gamma(\vec v, \vec w), \]
where $\vec v$ and $\vec w$ are dimension vectors of the ordinary and
framing vertices respectively in the quiver representation. Recall
that $X_\Gamma$ is a smooth algebraic symplectic variety. Let
$\sT = \sA \times \bC_{\hbar}^\times \subset \Aut(X_\Gamma)$ be a
(possibly maximal) torus such that $\bC_{\hbar}^\times$ scales the
symplectic form with weight $\hbar$ and $\sA$ preserves the symplectic
form.

The important work \cite{Nakajima2001}, and later \cite{Maulik2019,
 Okounkov2016}, showed that the equivariant K-theory group (of
coherent algebraic sheaves)
\begin{equation} \label{eq:equivariant-k-theory-quiver-variety}
 V_\Gamma(\vec w) \coloneqq K_\sT(X_\Gamma(\vec w)) %\coloneqq K_0(\cat{Coh}_T(X_\Gamma(\vec w)))
\end{equation}
is a highest-weight module for a quantum group $\scA_\Gamma$ which is
essentially a quantum affinized algebra. For example, when $\Gamma$ is
of finite ADE type, $\scA_\Gamma = U_q\big(\hat{\lie{g}}_\Gamma\big)$ is the
quantum affine algebra for (a mild central extension of) the classical
Lie algebra $\lie{g}_\Gamma$. More relevant for us, if $\Gamma$ is the
Jordan quiver, with one vertex and one edge loop, then
$\scA_\Gamma = U_{q,t}\big(\hhat{\lie{gl}}_1\big)$ is the quantum toroidal~$\lie{gl}_1$ algebra, which is morally (but not literally) the quantum
affinization of $\hat{\lie{gl}}_1$.

The geometric realization
\eqref{eq:equivariant-k-theory-quiver-variety} is useful for studying
the representation theory of quantum groups, e.g., if $\Gamma$ of finite ADE
type then modules of the form $V_\Gamma(\vec w)$ form a basis in the
Grothendieck ring of all finite-dimensional $\scA_\Gamma$-modules.
\end{MyParagraph}

\begin{MyParagraph}
Associated to each module $V_\Gamma(\vec w)$ is the {\it $qq$-character}
$\chiqq(V_\Gamma(\vec w))$, originally introduced in \cite{Nekrasov2016} to
study the BPS/CFT correspondence.

\begin{Definition}
 Let $\Taut$ denote the tautological bundle of $X_\Gamma$, and let $f(-)$
 be a function on $K_\sT(X_\Gamma)$ such that $f(\cE_1 + \cE_2) = f(\cE_1)
 f(\cE_2)$. Set
 \begin{equation} \label{eq:qq-character-geometric}
 \chiqq^{(f)} (V_\Gamma(\vec w); m, \kQ ) \coloneqq \sum_{\vec v} \kQ^{\vec v} \chi_\sT\big(X_\Gamma(\vec v, \vec w), \wedge_{-m}^\bullet \big(\cT^\vee \big) \otimes f(\Taut)\big).
 \end{equation}
 Here $m$ and $\kQ = (\kQ_i)_{i \in I}$ are formal variables, with
 $\kQ^{\vec v}$ being short for $\prod_{i \in I} \kQ_i^{v_i}$, and
 $\wedge_{-m}^\bullet(-) \coloneqq \sum_i (-m)^i \wedge^i(-)$ is an
 alternating sum of exterior powers, $\cT$ is the tangent bundle, and
 \[ \chi_\sT(X, \cF) \coloneqq \sum (-1)^i H^i(X, \cF) \in K_\sT(\pt)_{\loc} \]
 is the $\sT$-equivariant Euler characteristic of a coherent sheaf
 $\cF$. (When $X$ is non-compact but the fixed locus $X^{\sT}$ is,
 $\chi_\sT$ is defined via $\sT$-equivariant localization, hence the
 subscript $\loc$.)
\end{Definition}
\end{MyParagraph}

\begin{MyParagraph}
Note that \eqref{eq:qq-character-geometric} is {\it not} the original
(combinatorial!) characterization of $\chiqq$ from \cite[Section~6.1]{Nekrasov2016}, and instead we have used the geometric formula
from \cite[Sections~8.3 and 8.4]{Nekrasov2016}. The geometric formula arises
from integration over a certain {\it moduli of crossed instantons},
see \cite{Nekrasov2017} for an ADHM-style construction.
Algebro-geometrically, this moduli space admits a description and
virtual cycle in the style of Oh--Thomas~\cite{Oh2020}.\footnote{Private communication with N. Arbesfeld.}

Combinatorially, (the original) $qq$-characters may be constructed by
recursive expansion \cite{Feigin2021} in a similar fashion as for
$q$-characters \cite{Frenkel2001}. This is a great approach for
explicit computation, especially for $\scA_\Gamma$-modules which are not
geometrically realizable like in
\eqref{eq:equivariant-k-theory-quiver-variety}, but it is not the
direction we will take in this paper.
\end{MyParagraph}

\begin{MyParagraph}
We have allowed for an {\it arbitrary} multiplicative function $f$ in
\eqref{eq:qq-character-geometric}, while the original $qq$-characters
use a specific function $f_{\psi}$ (namely the product of all
$f_{\psi,i}$ from~\eqref{eq:psi-geometric}). We feel that
$qq$-characters with more general $f$ should be studied on equal
footing, especially in light of connections \cite[Section~4.2]{Liu2021}
between $\chiqq^{(f)}$ and quantities in the enumerative geometry of
curves in $3$-folds where $f$ corresponds exactly to a {\it descendent
 insertion}.

An example, which may be of independent interest, is when $f(-) = \cO$
is the trivial constant function, which we denote $f = 1$. In this
case~\eqref{eq:qq-character-geometric} is nothing but the
(equivariant) Hirzebruch $\chi_y$-genus
\[ \chiqq^{(1)}(V_\Gamma(\vec w); m, \kQ) = \chi_{\sT,m}(X_\Gamma(\vec w); \kQ) \coloneqq \sum_{\vec v} \kQ^{\vec v} \chi_{\sT}\big(X_\Gamma(\vec v, \vec w), \wedge_{-m}^\bullet\big(\cT^\vee\big)\big), \]
though our variable is called $m$ instead of~$y$.
\end{MyParagraph}

\subsection{The question(s)}

\begin{MyParagraph}
We will now pose a sequence of successively more precise questions
about the re\-pre\-sen\-ta\-tion-theoretic nature of $qq$-characters. Let
$V_\Gamma(\vec w)$ be a geometric representation of $\scA_\Gamma$ as
in Section~\ref{sec:geometric-realization-modules}.

\begin{question}\label{q:0}
 Can the quantity $\chiqq^{(f)}(V_\Gamma(\vec w); m, \kQ)$ be expressed
 purely in terms of operators in $\scA_\Gamma$ acting on $V_\Gamma(\vec w)$?
\end{question}

This question is motivated by the following observation. The
specialization $m=1$ for $qq$-characters gives
\begin{align}
 \chiqq^{(f)}(V_\Gamma(\vec w); 1, \kQ)
 &= \chi_{\sT}\left(X_\Gamma(\vec w), \kQ^{\cdots} \cdot \wedge_{-1}^\bullet\left(\cT^\vee\right) \otimes f(\Taut)\right) \label{eq:q-character-geometric} \\
 &= \chi_{\sT}\left(X_\Gamma(\vec w) \times X_\Gamma(\vec w), \kQ^{\cdots} \cdot \iota_\Delta(f(\Taut))\right), \label{eq:q-character-as-trace}
\end{align}
where in \eqref{eq:q-character-geometric} we abbreviated $\sum_{\vec
 v} \kQ^{\vec v}$ as $\kQ^{\cdots}$, and in
\eqref{eq:q-character-as-trace} $\iota_\Delta$ is the inclusion of the
diagonal. Hence \eqref{eq:q-character-as-trace} is the trace, in
$V_\Gamma(\vec w)$, of the operator of multiplication by $f(\Taut)$. It is
known that such operators always live in a commutative subalgebra of
$\scA_\Gamma \subset \End(V_\Gamma(\vec w))$; see \cite[Section~5.4]{Maulik2019}
(written for cohomology/Yangians, but the general principle still
applies).
\end{MyParagraph}

\begin{MyParagraph}  For a specific choice of $f$, the $m=1$ specialization is exactly a $q$-character.
\begin{Example} \label{ex:q-character}
 Let $\Gamma$ be of finite ADE type, and let $\big\{\psi_i^\pm(u)\big\}_{i \in I}$
 be the Drinfeld generators of the loop Cartan in
 $\scA_\Gamma = U_q(\hat{\lie{g}}_\Gamma)$. Let $\Taut = \bigoplus_i \Taut_i$
 be the decomposition across vertices of~$\Gamma$, and consider
 \begin{equation} \label{eq:psi-geometric}
 f_{\psi,i}(\Taut) \coloneqq \hat S^\bullet\big(\big(1 - \hbar^{-1}\big) \otimes \Taut_i\big),
 \end{equation}
 where $\hat S^\bullet(V) \coloneqq \Sym^\bullet(V) \otimes (\det
 V)^{1/2}$ is a ``symmetrized'' version of the symmetric algebra.
 Then
 \begin{equation} \label{eq:psi-geometric-u}
 \psi_i^\pm(u) = f_{\psi,i}(u \otimes \Taut),
 \end{equation}
 as operators expanded in series around $u^{\pm 1} \to 0$, see, e.g.,
 \cite[Theorem~9.4.1]{Nakajima2001}. Therefore
 $\chiqq^{(f_{\psi,i})}(V_\Gamma(\vec w); 1, \kQ)$ equals
 \begin{equation} \label{eq:q-character}
 \chi_q^{(i)}(V_\Gamma(\vec w); \kQ) \coloneqq \tr_{V_\Gamma(\vec w)} Q^{\cdots} \psi_i^\pm(u),
 \end{equation}
 also known as the ($i$-th) {\it $q$-character} of $V_\Gamma(\vec w)$. Up
 to some syntactic repackaging, \eqref{eq:q-character} is essentially
 the $q$-character originally introduced in \cite{Frenkel1999} for
 quantum affine algebras. This explains the nomenclature
 ``$qq$-character''.
\end{Example}
\end{MyParagraph}

\begin{MyParagraph}
Whenever the canonical bundle $\cK_X$ admits a square root, there is a
natural pairing on $K_\sT(X)$ given by
\begin{equation} \label{eq:k-theory-hermitian-pairing}
 \inner{\cF_1, \cF_2}_X \coloneqq \sum_i (-1)^i \Ext^i_\sT\big(\cF_1, \cF_2 \otimes \cK_X^{1/2}\big),
\end{equation}
and
$\inner{\cF_1, \cF_2}_X = (-1)^{\dim X} \inner{\cF_2, \cF_1}_X^\vee$
if Serre duality applies. Since our $X$ will always be smooth and
symplectic, \eqref{eq:k-theory-hermitian-pairing} becomes a Hermitian
form. We therefore use bra-ket notation for elements of $K_\sT(X)$,
along with the shorthand
\[ \inner{v | A | w}' \coloneqq \frac{\inner{v | A | w}}{\inner{v | w}}. \]
\end{MyParagraph}

\begin{MyParagraph}
For our purposes, it is useful to repackage \eqref{eq:q-character} as
follows. Let $\vec\delta_i = (0, \ldots, 0, 1, 0, \ldots, 0)$ be the
dimension vector with $1$ in the $i$-th position only, and consider
the highest-weight modules $V_i \coloneqq V_\Gamma(\vec\delta_i)$. Let
$\ket{\varnothing}$ denote the highest weight vector (up to scalars).

\begin{Proposition}
 Let $\Gamma$ be of finite ADE type. Let $V$ be a finite-dimensional
 $\scA_\Gamma$-module and $\sR_{V_i,V} \in \End(V_i \otimes V)$ be the
 R-matrix. Then
 \begin{equation} \label{eq:R-matrix-vacuum-elements}
 \psi_i^\pm(u) = \inner{\varnothing | \sR_{V_i,V} | \varnothing}_1' \in \End(V),
 \end{equation}
 where $\inner{-}_1$ means to take the matrix element in the first
 tensor factor $V_i$.
\end{Proposition}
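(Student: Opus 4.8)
The plan is to extract \eqref{eq:R-matrix-vacuum-elements} from the triangular (Gauss) decomposition of the universal $R$-matrix $\mathcal{R}$ of $\scA_\Gamma = U_q(\hat{\lie{g}}_\Gamma)$, of which $\sR_{V_i,V} = (\rho_{V_i}\otimes\rho_V)(\mathcal{R})$ is the evaluation on $V_i\otimes V$. For finite ADE type, relative to the Drinfeld loop triangular decomposition $\scA_\Gamma = U^{-}\otimes U^{0}\otimes U^{+}$, one has a factorization $\mathcal{R} = \mathcal{R}^{-}\,\mathcal{R}^{0}\,\mathcal{R}^{+}$, where $\mathcal{R}^{+}$ lies in a completion of $1 + U^{+}_{>0}\otimes U^{-}_{<0}$ and is assembled from the raising currents $x_i^{+}(u)$ in the first tensor factor, $\mathcal{R}^{-}$ is the mirror image built from $x_i^{-}(u)$ in the first factor, and $\mathcal{R}^{0}$ lies in a completion of $U^{0}\otimes U^{0}$ — an exponential in the commuting Cartan modes together with the finite Cartan/central part. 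Since $V_i \coloneqq V_\Gamma(\vec\delta_i)$ carries its tautological $\bC^{\times}$-grading, each of $\sR_{V_i,V}$, $\mathcal{R}^{\pm}$, $\mathcal{R}^{0}$ is a power series in the associated variable $u$, and the two currents $\psi_i^{\pm}(u)$ will appear as the two expansions, about $u^{\pm1}\to 0$, of a single operator-valued rational function of $u$. (One may instead use the geometric $R$-matrix of \cite{Maulik2019, Okounkov2016}, which inherits precisely this triangular shape from stable envelopes.)

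Given the factorization, the computation is immediate. The vacuum $\ket{\varnothing}\in V_i$ is a highest-weight vector, hence annihilated by all raising currents, so $\mathcal{R}^{+}$ acts as the identity in the first factor on $\ket{\varnothing}$; and $\bra{\varnothing}\, c = 0$ for every $c\in U^{-}_{<0}$, since such a covector would be supported on a weight space strictly above the highest weight of $V_i$, which is empty, so $\bra{\varnothing}\,\mathcal{R}^{-} = \bra{\varnothing}\otimes\id$. Hence
\[
 \inner{\varnothing | \sR_{V_i,V} | \varnothing}_1 \;=\; \inner{\varnothing | \mathcal{R}^{0} | \varnothing}_1 \;=\; \inner{\varnothing|\varnothing}\cdot\rho_V(\ell_i(u)),
\]
where $\ell_i(u) \coloneqq \bigl(\inner{\varnothing|\rho_{V_i}(-)|\varnothing}\otimes\id\bigr)(\mathcal{R}^{0}) \in U^{0}$ is a universal element of $\scA_\Gamma$ not depending on $V$. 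Dividing by $\inner{\varnothing|\varnothing}$ — the role of the prime, which also absorbs any normalization discrepancy between $\sR_{V_i,V}$ and $\mathcal{R}$ — the Proposition reduces to the universal identity $\ell_i(u) = \psi_i^{\pm}(u)$, after which acting by $\rho_V$ settles the case of an arbitrary finite-dimensional $V$ at once.

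Proving $\ell_i(u) = \psi_i^{\pm}(u)$ is the step with genuine content, and is where I expect the real work to lie. One needs that the Cartan factor $\mathcal{R}^{0}$, paired in its first leg against the highest $\ell$-weight of $V_\Gamma(\vec\delta_i)$, reproduces exactly the Drinfeld currents $\psi_i^{\pm}(u)$. For finite ADE type this can be read off from the explicit form of $\mathcal{R}^{0}$ in the Drinfeld realization; equivalently — and closer to the spirit of the paper — the diagonal part of the geometric $R$-matrix of \cite{Maulik2019, Okounkov2016} is multiplication by a $K$-theory class of the tautological bundle, which on $V_\Gamma(\vec\delta_i)$ is precisely the class $f_{\psi,i}(u\otimes\Taut)$ computing $\psi_i^{\pm}(u)$ via \cite[Theorem~9.4.1]{Nakajima2001}, the same input already used in Example~\ref{ex:q-character}. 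The superscript $\pm$ records which of the two expansion regions of $\ell_i(u)$ one takes; this choice is pinned down once and for all by the rank-one case, where $\sR$ and $\psi^{\pm}(u)$ for $U_q\big(\hat{\lie{sl}}_2\big)$ are fully explicit.

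The remaining obstacle is organisational rather than conceptual: one must set up the completions so that $\mathcal{R}^{\pm}$ and $\mathcal{R}^{0}$ really do converge as $u$-adic series on $V_i(u)\otimes V$, so that ``the off-diagonal factors kill the vacuum'' is a literal identity and not a merely formal manipulation, and one must align the normalization of $\sR_{V_i,V}$ with that of the universal $\mathcal{R}$ so that the primed matrix element is literally $\psi_i^{\pm}(u)$ and not a scalar multiple. Both are routine in finite ADE type, so the argument closes as sketched.
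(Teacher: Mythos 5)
Your argument is essentially the paper's own proof sketch: both hinge on the triangular factorization of the $R$-matrix (Gauss decomposition $\mathcal{R} = \mathcal{R}^-\mathcal{R}^0\mathcal{R}^+$ of the universal $R$-matrix, or equivalently $\sR = \Stab_-^{-1}\circ\Stab_+$ for the geometric $R$-matrix with $\Stab_\pm$ upper-/lower-triangular) so that only the Cartan/diagonal part survives in the vacuum matrix element $\inner{\varnothing|\cdot|\varnothing}_1$, and both then identify that diagonal part with $\psi_i^\pm(u)$ via \eqref{eq:psi-geometric-u}, i.e., \cite[Theorem~9.4.1]{Nakajima2001}. You foreground the algebraic (Drinfeld/Gauss) phrasing and treat the geometric one as a parallel check, while the paper states the geometric version directly, but the underlying content and the key references — \cite[Section~2.3]{Okounkov2016}, \cite[Section~4.7]{Maulik2019}, and \cite[Theorem~9.4.1]{Nakajima2001} — coincide.
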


In fact, in general the entire Hopf algebra $\scA_\Gamma$, not just
its elements $\psi_i^\pm(u)$, is constructed from the data of
R-matrices $\{\sR_{V,V'}\}$ where $V$, $V'$ range over the modules~\eqref{eq:equivariant-k-theory-quiver-variety} (\cite{Reshetikhin1989},
\cite[Section~3]{Okounkov2016}  or \cite[Section~5]{Maulik2019} for the
cohomological case). When $\Gamma$ has loops, in general
$\psi_i^\pm(u)$ is a product of the matrix elements in the r.h.s.\ of~\eqref{eq:R-matrix-vacuum-elements}.

\begin{proof}[Proof sketch.]
 Recall $\sR = \Stab_-^{-1} \circ \Stab_+$ where $\Stab_\pm$ are
 upper-/lower-triangular stable envelopes. So only the diagonal
 terms, normalized exactly to give \eqref{eq:psi-geometric-u},
 contribute to $\inner{\varnothing | \sR | \varnothing}$; see the
 factorization of $\sR$ in \cite[Section~2.3]{Okounkov2016} or the
 cohomological argument in \cite[Section~4.7]{Maulik2019}.
\end{proof}

Therefore
\begin{equation} \label{eq:q-character-as-vacuum-matrix-element}
 \chiq^{(i)}(V; \kQ) = \tr_V \kQ^{\cdots} \inner{\varnothing | \sR_{V_i,V} | \varnothing}_1'.
\end{equation}
\end{MyParagraph}

\begin{MyParagraph}
In complete analogy with~\eqref{eq:q-character-as-vacuum-matrix-element}, a natural refinement
of Question~\ref{q:0} is the following.

\begin{question}\label{q:1}
 Does there exist a highest-weight module $W$ and an operator
 $\sRR_{W,V}(m)$ such that, for appropriate $f$,
 \begin{equation} \label{eq:abstract-RR-matrix-for-qq-character}
 \chiqq^{(f)}(V; m, \kQ) = \tr_V \kQ^{\cdots} \frac{\inner{\varnothing | \sRR_{W,V}(m) | \varnothing}_1'}{\inner{\varnothing \otimes \varnothing | \sRR_{W,V}(m) | \varnothing \otimes \varnothing}'},
 \end{equation}
 with $\sRR_{W,V}(1) = \sR_{W,V}$?
\end{question}

With our normalization conventions,
$\inner{\varnothing \otimes \varnothing | \sR_{W,V} | \varnothing \otimes \varnothing}' = 1$
and therefore this factor was not present in
\eqref{eq:q-character-as-vacuum-matrix-element}. For brevity, let
\[ \inner{\varnothing | \sRR | \varnothing}_1'' \]
denote the normalized operator in the r.h.s.\ of
\eqref{eq:abstract-RR-matrix-for-qq-character}, so
$\chiqq^{(f)} = \tr \kQ^{\cdots} \inner{\varnothing | \sRR | \varnothing}_1''$.
\end{MyParagraph}

\begin{MyParagraph}
We take a very specific and somewhat naive approach to answering
Question~\ref{q:1} for the modules $V = V_\Gamma(\vec w)$. Let
$X = X_\Gamma(\vec w)$, and let $\{\ket{\cO_p}\}_{p \in X^\sT}$ be the
basis of structure sheaves of fixed points. The operator of
multiplication by $\Taut$ acts diagonally in this basis.

\begin{question}\label{q:2}
 Does there exist a highest-weight module $W$ and an operator
 $\sRR_{W,V}(m)$ such that
 \[ \frac{\wedge_{-m}^\bullet(\cT_p^\vee)}{\wedge_{-1}^\bullet(\cT_p^\vee)} = \frac{\inner*{\varnothing \otimes \cO_p | \sRR_{W,V}(m) | \varnothing \otimes \cO_p}'}{\inner*{\varnothing \otimes \varnothing | \sRR_{W,V}(m) | \varnothing \otimes \varnothing}'} \]
 for every $p \in X^{\sT}$?
\end{question}

Such an operator would essentially answer Question~\ref{q:1} since, by
$\sT$-equivariant localization,
\begin{equation} \label{eq:qq-character-in-fixed-point-basis}
 \chiqq^{(f)}(V; m, \kQ) = \tr_V \kQ^{\cdots} f(\Taut) \inner*{\varnothing | \sRR_{W,V}(m) | \varnothing}_1''.
\end{equation}
\end{MyParagraph}

\begin{MyParagraph}
In Section~\ref{sec:geometric-engineering}, we answer the following variant
of Question~\ref{q:2} in the affirmative for the Jordan quiver $\Gamma$,
for which the Nakajima quiver variety $X_\Gamma(r) = \cM_r$ is the moduli
of rank-$r$ instantons.

\begin{question} \label{q:2p}
 Does there exist a highest-weight module $W$ and an operator
 $\sRR_{W,V}(m)$ such that
 \[ \frac{\wedge_{-m}^\bullet(\cT_p^\vee)}{\wedge_{-1}^\bullet(\cT_p^\vee)} = \frac{\inner*{\varnothing \otimes \cF_p | \sRR_{W,V}(m) | \varnothing \otimes \cF_p}'}{\inner*{\varnothing \otimes \varnothing | \sRR_{W,V}(m) | \varnothing \otimes \varnothing}'} \]
 for every $p \in X^{\sT}$, for some basis
 $\{\ket{\cF_p}\}_{p \in X^{\sT}}$ with nice properties?
\end{question}

One reason to look beyond the basis $\{\ket{\cO_p}\}$ of fixed points
is that fixed points do not behave nicely with respect to tensor
product; see Section~\ref{sec:instanton-moduli-bases} for details in the
case of $\cM_r$. For example, any operator $\sRR_{W,V}$ satisfying the
original Question~\ref{q:2} has little hope of satisfying the fusion
property of the original R-matrices $\sR_{W,V}$ (see
Section~\ref{sec:RR-matrix-fusion}), but our operator $\sRR_{W,V}$ will
satisfy Question~\ref{q:2p} for a basis $\{\ket{\cO^{\otimes}_p}\}$
preserving this fusion property.
\end{MyParagraph}

\begin{MyParagraph}
The discrepancy between Questions~\ref{q:2} and \ref{q:2p} is exactly
the obstruction to incorporating a~non-trivial insertion $f$. Namely,
we answer Question~\ref{q:2p} in a basis $\{\ket{\cO_p^\otimes}\}$
where $\Taut$ does {\it not} act diagonally (see
Section~\ref{sec:fixed-point-vs-naive-rank-r-basis}), and therefore the
resulting $\sRR_{W,V}(m)$ no longer answers Questions~\ref{q:1} or
\ref{q:2}; only the specific $f=1$ case
\[ \chiqq^{(1)}(V; m, \kQ) = \tr_V \kQ^{\cdots} \inner*{\varnothing | \sRR_{W,V}(m) | \varnothing}_1'' \]
of \eqref{eq:qq-character-in-fixed-point-basis} continues to hold. To
incorporate a general $f$, one could try to rewrite $\sRR(m)$ in the
true fixed-point basis $\{\ket{\cO_p}\}$. Put differently, if $\sS$ is
the change of basis from $\{\ket{\cO_p}\}$ to
$\{\ket{\cO_p^\otimes}\}$, then $\sS^{-1}\cdot \sRR\cdot \sS$
satisfies Question~\ref{q:2} if the operator $\sRR$ satisfies
Question~\ref{q:2p}. However, it is unclear whether there is a
representation-theoretic interpretation or formula for $\sS$.
\end{MyParagraph}

\begin{MyParagraph}\label{sec:RR-spectrum-vs-trace}
To be clear, the conditions imposed by Questions~\ref{q:2} and
~\ref{q:2p} are on the {\it spectrum} of the operator~$\sRR$, while
the condition of Question~\ref{q:1} is merely on the trace of $\sRR$.
One therefore expects the former to be far more stringent than the
latter. Indeed, we see this explicitly as a~consequence of the results
in Section~\ref{sec:geometric-engineering}, where we find many different
operators $\sRR$ such that
\[ \chi_{\sT,m}(\cM_r; \kQ) = \tr \kQ^{\cdots} \inner{\varnothing | \sRR | \varnothing}_1'' \]
is the Hirzebruch $\chi_y$-genus of $\cM_r$, but only one such $\sRR$
has the ``correct'' diagonal elements.
\end{MyParagraph}

\section{Geometric engineering}
\label{sec:geometric-engineering}

\subsection{The setup}
\label{sec:geometric-engineering-setup}

\begin{MyParagraph}
Let $\Gamma$ be the Jordan quiver, with one vertex and
one edge loop. The Nakajima quiver variety $X_\Gamma(r)$ is the moduli
\[ \cM_r \coloneqq \cM_r(\bC^2) \coloneqq \big\{E \in \cat{Coh}(\bP^2) \text{ torsion-free}\colon E\big|_{\bP^1_\infty} \cong \cO_{\bP^1_\infty}^{\oplus r}\big\} \]
of rank-$r$ instantons on $\bC^2$. It admits natural actions induced
by $\GL_2$ acting on $\bC^2$, and by $\GL_r$ acting on the framing
$\cO_{\bP^1_\infty}^{\oplus r}$. Let
\[ \sT \coloneqq \sT_{\text{framing}} \times \sT_{\bC^2} \coloneqq (\bC^\times)^r \times (\bC^\times)^2 \ni (a_1, \ldots, a_r, q, t) \]
be the maximal torus of this $\GL_r \times \GL_2$, with coordinates
written as above. The $r = 1$ case is the Hilbert scheme $\cM_1 =
\Hilb$ of points on $\bC^2$.
\end{MyParagraph}

\begin{MyParagraph}
Let
$\bk \coloneqq K_{\sT}(\pt)_{\loc} = \bZ\big[a_1^\pm, \ldots, a_r^\pm, q^\pm, t^\pm\big]_{\loc}$
where $\loc$ means to adjoin $(1-w)^{-1}$ for all non-zero monomials
$w \in K_{\sT}(\pt)$. All our modules and computations are implicitly
over this base ring. Set
\[ \sF \coloneqq K_\sT(\Hilb). \]
The quantum group associated to $\Gamma$ is the quantum toroidal
$\lie{gl}_1$ algebra $\scA_\Gamma = U_{q,t}\big(\hhat{\lie{gl}}_1\big)$, and $\sF$
is its standard Fock module. See
Section~\ref{sec:quantum-toroidal-gl1-details} for some more details. By
general principles \cite{Maulik2019}, stable envelopes provide an
isomorphism of $\scA_\Gamma$-modules
\begin{equation} \label{eq:k-theory-instanton-moduli}
 \Stab\colon \ \sF^{\otimes r} \otimes \bk \xrightarrow{\sim} K_\sT(\cM_r) \otimes \bk.
\end{equation}
Stable envelopes can be viewed as ``corrected'' versions of (the
pushforward along) the inclusion
\[ \iota\colon \ (\cM_r)^{\sT_{\text{framing}}} = \Hilb^{\times r} \hookrightarrow \cM_r \]
of the $\sT_{\text{framing}}$-fixed locus, which itself only induces
an isomorphism of $\bk$-modules.
\end{MyParagraph}

\begin{MyParagraph}
The $\sT_{\bC^2}$-fixed points of $\Hilb$, and therefore basis
elements of $\sF$, are labeled by partitions~$\lambda$. The
$\sT$-fixed points of $\cM_r$ are therefore $r$-tuples $\vec\lambda =
\big(\lambda^{(1)}, \ldots, \lambda^{(r)}\big)$ of partitions. So here we fix
some notation for partitions.

We will use the letters $\lambda$, $\mu$, $\nu$ to denote
partitions. Occasionally it is useful to view a~partition
$\lambda = (\lambda_1, \lambda_2, \ldots)$ in terms of its Young
diagram. Let $(i, j) = (i(\square), j(\square))$ be the position of a~square $\square \in \lambda$ in the Young diagram. If $\lambda^t$ is
the conjugate partition, set
\[ a(\square) \coloneqq \lambda_{i(\square)} - j(\square), \qquad \ell(\square) \coloneqq \big(\lambda^t\big)_{j(\square)} - i(\square). \]
Let $|\lambda| \coloneqq \sum_i \lambda_i$ denote the size of~$\lambda$.
\end{MyParagraph}

\begin{MyParagraph}\label{sec:instanton-moduli-bases}
We identify $\sF$ with the algebra of symmetric functions. From
\eqref{eq:k-theory-hermitian-pairing}, there is a standard inner
product $\inner{-, -}$ on $\sF$. We will use two bases in $\sF$:
\begin{itemize}\itemsep=0pt
\item the basis of Schur polynomials $s_\lambda$, which are
 orthonormal;
\item the basis of fixed points $\cO_\lambda$, which are orthogonal
 with norm
 \[ \inner{\cO_\lambda, \cO_\lambda} = (qt)^{\frac{|\lambda|}{2}} \prod_{\square \in \lambda} \big(1 - q^{-\ell(\square)-1} t^{a(\square)}\big) \big(1 - q^{\ell(\square)} t^{-a(\square)-1}\big). \]
 This can also be taken as the definition of $\inner{-, -}$ if
 desired. Let $\tilde\cO_\lambda$ be the unit vector normalization of
 $\cO_\lambda$, so they form an ortho{\it normal} basis.
\end{itemize}
The elements $\cO_\lambda \in \sF$ are Haiman's normalization of
Macdonald polynomials, e.g., denoted $\tilde H_\lambda$ in~\cite{Haiman1999}.
\end{MyParagraph}

\begin{MyParagraph}\label{sec:fixed-point-vs-naive-rank-r-basis}
We identify $\sF^{\otimes r}$ with $K_\sT(\cM_r)$ using
\eqref{eq:k-theory-instanton-moduli}. The tensor product
$\sF^{\otimes r}$ inherits the inner product from $\sF$; note that
this is {\it not} the inner product on $K_\sT(\cM_r)$ from
\eqref{eq:k-theory-hermitian-pairing}. We consider two bases in
$\sF^{\otimes r}$:
\begin{itemize}\itemsep=0pt
\item the basis of generalized Schur polynomials
 $s_{\vec\lambda} \coloneqq \otimes_{i=1}^r s_{\lambda^{(i)}}$, which
 are orthonormal;
\item the basis of fixed points $\cO_{\vec\lambda}$, also known as
 {\it generalized} Macdonald polynomials (see, e.g.,~\cite{Awata2016},
 cf.~\cite{Smirnov2014}), which are not orthogonal.
\end{itemize}
The latter is not the same as
\[ \cO^{\otimes}_{\vec\lambda} \coloneqq \otimes_{i=1}^r \cO_{\lambda^{(i)}} \in \sF^{\otimes r}, \]
which do form an orthonormal basis. This is because although
$\cO_{\vec\lambda} = \iota_* \boxtimes_{i=1}^r \cO_{\lambda^{(i)}}$ as
sheaves on~$\cM_r$, the identification
\eqref{eq:k-theory-instanton-moduli} is not $\iota_*$. A crucial
distinguishing property is that, in general,
\[ \cO_{(\lambda^{(1)}, \lambda^{(2)})} \neq \cO_{\lambda^{(1)}} \otimes \cO_{\lambda^{(2)}}. \]
\end{MyParagraph}

\subsection[The chi\_y-genus]{The $\boldsymbol{\chi_y}$-genus}

\begin{MyParagraph}
Our primary goal is to study the $\chi_y$-genus
$\chi_{\sT,m}(\cM_r; \kQ)$, and various representation-theoretic
description of it in terms of operators in the algebra
$U_{q,t}\big(\hhat{\lie{gl}}_1\big)$. The strategy is to compute the
$\chi_y$-genus via a form of {\it geometric engineering}, which
equates it to the refined partition functions of certain special toric
$3$-folds. One consequence, among others, of our computations is an
affirmative answer (Theorem~\ref{thm:instanton-RR-operator}) to
Question~\ref{q:2p}.
\end{MyParagraph}

\begin{MyParagraph}
Refined partition functions in the toric setting are (sums of)
products of contribu\-tions~$C_{\lambda\mu\nu}(q,t)$, called {\it
 refined vertices}, from each toric chart. One labels each edge of
the toric $1$-skeleton with a partition and performs certain
combinatorial sums over them; see \cite{Iqbal2009} for details. For
example,
\[ \includepic{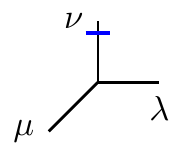} = C_{\lambda\mu\nu}(q,t), \qquad \includepic{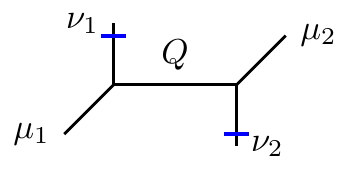} = \sum_\lambda Q^{|\lambda|} C_{\lambda\mu_1\nu_1}(q,t) C_{\lambda^t\mu_2\nu_2}(t,q). \]
Here a marked half-edge
\includegraphics{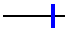} labels
the ``preferred direction'' $\nu$ of the vertex $C_{\lambda\mu\nu}$,
which is necessary because it is not symmetric in $\lambda$, $\mu$, $\nu$.
(This asymmetry is evident in the explicit formula~\eqref{eq:refined-vertex} later.) An unlabeled half-edge is set to
$\varnothing$, and any other edge not explicitly labeled by a partition
is summed over. Each edge may be labeled with a so-called {\it
 K\"ahler variable}, e.g.,~$Q$, indicating a term $Q^{|\cdot|}$
recording the size of the partition on the edge. The result is a~function of~$q$,~$t$, and various K\"ahler variables.

\begin{Remark}
 For the experts, all our edges will have normal bundles
 $\cO(-1)^{\oplus 2}$, i.e., everything is locally a conifold, so we
 neglect framing factors when gluing refined vertices.
\end{Remark}
\end{MyParagraph}

\begin{MyParagraph}
Different choices of toric diagram engineer different quantities on
$\cM_r$, or more generally $\cM_{r_1} \times \cdots \times \cM_{r_k}$,
and a general recipe is given in~\cite{Katz1997}. However, it is
important that the diagram does {\it not} need to be the $1$-skeleton
of an actual toric $3$-fold for geometric engineering to work. In
particular, the $\chi_y$-genus involves gluing edges in the following
non-toric way.

\begin{Proposition} \label{prop:engineering-adjoint-matter}
 With the substitution $t \mapsto t^{-1}$,
 \begin{equation} \label{eq:adjoint-matter-diagram}
 \chi_{\sT,\kappa}(\cM_r; Q) = \includepic{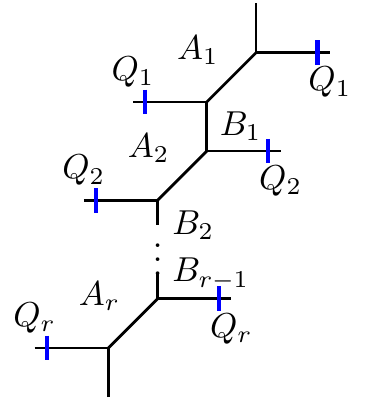} \bigg/ \includepic{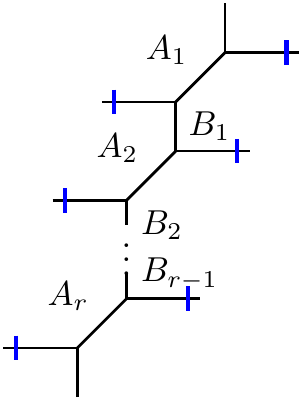}
 \end{equation}
 where the half-edges with the same variable $Q_i$ are glued
 together, i.e., there is an additional overall sum $\sum_{\lambda_1,
 \ldots, \lambda_r} Q_1^{|\lambda_1|} \cdots Q_r^{|\lambda_r|}$,
 and there are identifications
 \begin{equation} \label{eq:kahler-variable-identifications}
 Q_1 = \cdots = Q_r = Q, \qquad A_1 = \cdots = A_r = \kappa, \qquad A_k B_k = a_{k+1}/a_k.
 \end{equation}
\end{Proposition}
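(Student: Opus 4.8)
The plan is to realize both sides of \eqref{eq:adjoint-matter-diagram} as $\sT$-equivariant K-theoretic partition functions over instanton moduli, and match them term by term in the fixed-point expansion. On the geometric side, recall that $\chi_{\sT,\kappa}(\cM_r;Q) = \sum_{\vec v}Q^{\vec v}\chi_\sT\big(\cM_r(\vec v),\wedge^\bullet_{-\kappa}(\cT^\vee)\big)$, and by $\sT$-equivariant localization this becomes a sum over $\sT$-fixed points, i.e.\ over $r$-tuples $\vec\lambda$ of partitions, of $\kappa$-deformed characters of the (virtual) tangent space $\cT_{\vec\lambda}\cM_r$. The standard ADHM/Nekrasov formula expresses $\cT_{\vec\lambda}\cM_r$ as a sum of characters $\mathsf{n}_{\lambda^{(i)}\lambda^{(j)}}(a_i/a_j)$ over all pairs $(i,j)$, each built from arm/leg data of the two partitions; the diagonal pairs $i=j$ give the $U(1)^r$ "perturbative" or "Cartan" part, and the off-diagonal pairs give the "bifundamental" or "adjoint-matter" interactions between the framing factors. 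The first step is therefore to write out this localization formula explicitly and identify $\wedge^\bullet_{-\kappa}$ applied to each $\mathsf{n}_{\lambda^{(i)}\lambda^{(j)}}$ with the corresponding combinatorial product appearing in the refined-vertex computation — this is where the substitution $t\mapsto t^{-1}$ enters, converting the symplectic/cohomological conventions of $\cM_r$ into the refined-vertex conventions of \cite{Iqbal2009}, and where the parameter $\kappa$ gets identified with the Kähler variables $A_i$ on the internal "adjoint" edges (hence $A_1=\cdots=A_r=\kappa$).

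Next I would compute the right-hand side of \eqref{eq:adjoint-matter-diagram} directly from the vertex rules. The strategy is to choose the preferred direction so that all the edges carrying the Kähler variables $Q_i$ (the "instanton" edges) are in preferred direction, and all the $A_i$-edges and $B_i$-edges run transversely. Then, summing over the non-preferred partitions using the standard refined-vertex summation identities (of Cauchy type, e.g.\ those in \cite{Awata2012,Iqbal2010}), each strand of the diagram collapses to a product over boxes of $\lambda^{(i)}$ and $\lambda^{(j)}$ of exactly the arm/leg factors that appeared on the geometric side. The diagonal terms produce the denominator diagram $\includepic{images/adjoint-matter-perturbative-diagram.pdf}$ — this is the $\vec v=\vec 0$-normalized perturbative contribution and corresponds to removing the empty-partition term, matching how $\chi_{\sT,\kappa}$ is normalized. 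The ratio $A_kB_k=a_{k+1}/a_k$ is forced precisely so that the combined Kähler/framing weight on the edge between the $k$-th and $(k{+}1)$-th strands reproduces the equivariant weight $a_{k+1}/a_k$ of the off-diagonal ADHM term $\mathsf{n}_{\lambda^{(k)}\lambda^{(k+1)}}$.

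The main obstacle I anticipate is the bookkeeping of conventions: the refined vertex $C_{\lambda\mu\nu}(q,t)$ is asymmetric in its arguments and in $q\leftrightarrow t$ (note the $C_{\lambda^t\mu_2\nu_2}(t,q)$ appearing in glued diagrams), so correctly tracking which of $q,t$ plays which role along each edge — and verifying that the non-toric gluing of the $Q_i$-half-edges is consistent with a single global choice of preferred direction — requires care. This is also exactly the point at which independence of preferred direction (proved later, in Section~\ref{sec:dependence-on-preferred-direction}) is morally needed to know the diagram is well-defined; here I would either fix one preferred direction once and for all, or invoke that independence. Modulo these conventions, the proof reduces to matching two explicit product formulas box-by-box, which is a direct (if lengthy) computation rather than a conceptual difficulty; the substance is the identification of the localization formula for $\wedge^\bullet_{-\kappa}(\cT^\vee_{\vec\lambda}\cM_r)$ with the refined-vertex combinatorics, together with the correct normalization by the perturbative factor.
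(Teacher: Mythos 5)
Your roadmap matches the paper's approach: the paper's proof is the one-liner ``by explicit calculation using the formalism in \cite{Iqbal2006} (or otherwise),'' and the paper notes immediately afterward that the calculation proceeds exactly by identifying $\wedge^\bullet_{-\kappa}\big(\cT^\vee_{\vec\lambda}\big)/\wedge^\bullet_{-1}\big(\cT^\vee_{\vec\lambda}\big)$ with $Z_r(\cdots)_{\vec\lambda}/Z_r(\cdots)_{\vec\varnothing}$ at each $\sT$-fixed point~$\vec\lambda$, under the stated K\"ahler-variable identifications and with horizontal preferred direction. One small correction to your bookkeeping: the denominator diagram is the full perturbative ($\vec\lambda=\vec\varnothing$) contribution of the glued diagram, not the diagonal $i=j$ pieces of the Nekrasov tangent character, which in fact contribute to both numerator and denominator.
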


\begin{proof} By explicit calculation using the formalism in~\cite{Iqbal2006} (or otherwise).
\end{proof}

In physics language, Proposition~\ref{prop:engineering-adjoint-matter}
gives the ``toric'' diagram for engineering adjoint $U(r)$ matter. The
denominator is referred to as the {\it perturbative} term, and has an
explicit closed form formula unimportant to us.
\end{MyParagraph}

\begin{MyParagraph}
Let $Z_r\big(q,t^{-1}; Q, A, \vec B\big)$ be the quantity encoded by the
numerator of~\eqref{eq:adjoint-matter-diagram}; since all the~$Q_i$
are specialized to be equal, we retain only a single variable denoted
$Q$, and similarly for the~$A_i$ and~$A$. For clarity, $Z_r$ is
written out explicitly in
\eqref{eq:adjoint-diagram-formula-horizontal}.

Let
$\vec\lambda = \big(\lambda^{(1)}, \lambda^{(2)}, \ldots, \lambda^{(r)}\big)$
be the partitions labeling the horizontal legs of~\eqref{eq:adjoint-matter-diagram}, so that we can write the individual
contributions of the diagram with fixed horizontal legs as
\[ Z_r(q,t;Q,A,\vec B) \eqqcolon \sum_{\vec\lambda} Q^{|\vec\lambda|} Z_r(q,t; A,\vec B)_{\vec\lambda}. \]
For example, the denominator of~\eqref{eq:adjoint-matter-diagram} is
$Z_r(\cdots)_{\vec\varnothing}$.

\begin{Theorem} \label{thm:instanton-RR-operator}
 There is an operator $\sRR \in \End(\sF \otimes \sF)[[Q, A]]$, with
 explicit formula given by~\eqref{eq:instanton-four-point-horizontal-operator-formula}, such that
 \begin{gather}
 \big({-}\sqrt{qt}\big)^{|\vec\lambda|} Z_r\big(q, t^{-1}; A, A\vec B\big)_{\vec\lambda} \nonumber\\
 \qquad{}= \big\langle \varnothing \otimes \cO^{\otimes}_{\vec\lambda} \big| \sRR^{(10)} B_1^{|\cdot|} \sRR^{(20)} B_2^{|\cdot|} \cdots B_{r-1}^{|\cdot|} \sRR^{(r0)} \big| \varnothing \otimes \cO^{\otimes}_{\vec\lambda}\big\rangle,\label{eq:instanton-RR-operator-matrix-elements}
 \end{gather}
 where $\sRR^{(ij)}$ means to act on the $i$-th and $j$-th tensor
 factors and the $B_i^{|\cdot|}$ act in the $0$-th tensor factor.
\end{Theorem}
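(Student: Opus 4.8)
**

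The plan is to identify the combinatorial sum $Z_r(q,t^{-1};A,A\vec B)_{\vec\lambda}$ directly with a composition of refined four-point functions, and then recognize each four-point function as a matrix element of a single operator $\sRR$ in the bra-ket formalism on $\sF\otimes\sF$. First I would unwind Proposition~\ref{prop:engineering-adjoint-matter}: the numerator diagram of \eqref{eq:adjoint-matter-diagram} is, by construction, a ``ladder'' of refined vertices whose horizontal legs carry the fixed partitions $\lambda^{(1)},\dots,\lambda^{(r)}$ and whose vertical/internal legs are summed. Grouping the vertices two at a time along the ladder, each rung is precisely a \emph{refined four-point function} in the sense of \cite{Awata2016}: a sum over one internal partition of a product $C_{\bullet\bullet\nu}(q,t)C_{\bullet\bullet\nu^t}(t,q)$ with appropriate K\"ahler and framing data. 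So $Z_r(\cdots)_{\vec\lambda}$ is literally a product of $r$ such four-point functions, chained through the horizontal partitions $\lambda^{(i)}$, with the $A_i,B_i$ variables bookkeeping the gluing parameters as in \eqref{eq:kahler-variable-identifications}.

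Next I would invoke the operator realization of refined vertices in terms of $U_{q,t}(\hhat{\lie{gl}}_1)$ going back to \cite{Awata2012,Iqbal2010}: the refined topological vertex $C_{\lambda\mu\nu}$ is a matrix element of a composition of vertex operators built from the horizontal Fock representation. The refined four-point function is then the trace/matrix element of a product of two such vertex-operator strings; collecting the pieces that depend on the ``through'' Fock factor versus the ``internal'' Fock factor exhibits it as $\langle\varnothing\otimes\cO^\otimes_{\mu}|\,\sRR\,|\varnothing\otimes\cO^\otimes_{\nu}\rangle$ for a fixed operator $\sRR\in\End(\sF\otimes\sF)[[Q,A]]$ --- this is the content of the explicit formula \eqref{eq:instanton-four-point-horizontal-operator-formula} being referenced, so I would \emph{define} $\sRR$ by that formula and check that its matrix elements in the orthonormal basis $\{\cO^\otimes_\lambda\}$ reproduce one rung. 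Crucially, the basis appearing is $\cO^\otimes_{\vec\lambda}=\otimes_i\cO_{\lambda^{(i)}}$ (generalized Macdonald polynomials factor-by-factor), not the generalized Macdonald polynomials $\cO_{\vec\lambda}$ on $\cM_r$; this is forced because the four-point function is diagonalized by Macdonald polynomials in each horizontal Fock factor separately, which is exactly the statement made in Section~\ref{sec:fixed-point-vs-naive-rank-r-basis} and is why Theorem~\ref{thm:instanton-RR-operator} answers Question~\ref{q:2p} rather than Question~\ref{q:2}.

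Having matched one rung, I would assemble the chain: the $i$-th four-point function couples horizontal partitions $\lambda^{(i)}$ and $\lambda^{(i+1)}$ through an internal sum weighted by $B_i^{|\cdot|}$, and in operator language this is precisely $\sRR^{(i0)}B_i^{|\cdot|}$ acting on the $i$-th through factor and the common $0$-th internal factor, with the telescoping structure of the ladder producing the ordered product $\sRR^{(10)}B_1^{|\cdot|}\sRR^{(20)}\cdots B_{r-1}^{|\cdot|}\sRR^{(r0)}$ sandwiched between $\langle\varnothing\otimes\cO^\otimes_{\vec\lambda}|$ and $|\varnothing\otimes\cO^\otimes_{\vec\lambda}\rangle$. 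The normalization prefactor $(-\sqrt{qt})^{|\vec\lambda|}$ is the standard framing/orientation correction that converts the ``topological string'' normalization of the refined vertex into the ``K-theory'' normalization in which the $\cO_\lambda$ are Haiman's $\tilde H_\lambda$; I would pin it down by comparing the empty-partition case against the perturbative term and against the known rank-one computation on $\Hilb$.

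The main obstacle I expect is bookkeeping the \emph{preferred-direction} conventions and the $q\leftrightarrow t$ swaps consistently across the ladder: the refined vertex is not symmetric in its three legs, successive rungs alternate $C_{\cdots}(q,t)$ with $C_{\cdots}(t,q)$, and the substitution $t\mapsto t^{-1}$ in \eqref{eq:adjoint-matter-diagram} interacts with this. Getting every sign, every $\sqrt{qt}$, and every conjugate-partition transpose to line up so that a \emph{single} operator $\sRR$ (not two alternating ones) does the job for every rung is the delicate point; I would handle it by working out $r=1$ and $r=2$ by hand, fixing all conventions there, and then arguing the general case by the evident inductive/telescoping structure of the diagram. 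The representation-theoretic identities needed --- the vertex-operator formula for $C_{\lambda\mu\nu}$ and the fact that Macdonald polynomials are the eigenbasis of the relevant commuting family --- are all available in \cite{Awata2012,Awata2016,Iqbal2010}, so no new algebra is required beyond careful assembly.
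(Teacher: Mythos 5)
Your high-level plan is the paper's: decompose the numerator of \eqref{eq:adjoint-matter-diagram} into a ladder of four-point functions as in \eqref{eq:adjoint-diagram-formula-horizontal}, realize each rung as a matrix element of a single operator on $\sF\otimes\sF$ diagonal in the $\cO^\otimes_{\vec\lambda}$ basis, and chain through the auxiliary $0$-th factor; your remarks about why $\cO^\otimes_{\vec\lambda}$ rather than $\cO_{\vec\lambda}$ appears, and why this settles Question~\ref{q:2p} but not \ref{q:2}, are on target. The concrete gap is in the final sentence, where you assert ``no new algebra is required beyond careful assembly'' by citing \cite{Awata2012,Awata2016,Iqbal2010}. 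Those precursors write $C_{\lambda\mu\nu}$ as a matrix element or intertwiner in a \emph{single} Fock space, with the preferred-leg partition $\nu$ entering only as a specialization point $q^{-\rho}t^{\nu}$ of a vertex operator argument. If you follow them literally, you get a $\nu$-indexed family of operators in $\End(\sF)$, not a single operator in $\End(\sF\otimes\sF)$, and the step ``collecting the pieces that depend on the `through' versus the `internal' Fock factor'' stalls.

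The missing ingredient is exactly Lemma~\ref{lem:skew-schur-function-as-operator-2}: the observation that $p_n(q^{-\rho}t^{\nu})$ equals (up to elementary $q,t$ factors) the eigenvalue $h_n(\nu;q,t)$ of the vertical generator $H_n$ on $\cO_\nu$, so the specialization can be promoted to the operator $H_n\otimes\alpha_n$ on $\sF\otimes\sF$, producing the $\Gamma_\pm(z)$ of \eqref{eq:gamma-vertex-operators}. This interaction of the horizontal Heisenberg and vertical commutative subalgebras is what the paper flags as its main novelty, and it is what makes the rung a genuine operator $\sRR^{(i0)}$ simultaneously diagonal (via $\sD$, $H_n$) in the $i$-th factor and operator-valued (via $\alpha_n$) in the $0$-th. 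Two smaller imprecisions: the four-point function $Z^H_{\nu,\nu'}$ involves \emph{two} internal sums (over $\lambda$ with weight $Q$ and over $\mu$ with weight $A$), not one; and the prefactor $\big({-}\sqrt{qt}\big)^{|\vec\lambda|}$ is not a framing normalization but falls out of the Macdonald-norm computation \eqref{eq:four-point-macdonald-factors}, which simultaneously produces the diagonal operator $\sD$.
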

\end{MyParagraph}

\begin{MyParagraph}
In fact, the proof of
Proposition~\ref{prop:engineering-adjoint-matter} proceeds by identifying
\[ \frac{\wedge^\bullet_{-\kappa} \big(\cT_{\vec\lambda}^\vee\big)}{\wedge^\bullet_{-1} \big(\cT_{\vec\lambda}^\vee\big)} = \frac{Z_r(\cdots)_{\vec\lambda}}{Z_r(\cdots)_{\vec\varnothing}} \]
up to the identifications \eqref{eq:kahler-variable-identifications}.
Hence Theorem~\ref{thm:instanton-RR-operator} resolves
Question~\ref{q:2p} in the affirmative for~$\cM_r$. Note that the
operator
\[ \big\langle \varnothing \big| \sRR^{(10)} B_1^{|\cdot|} \sRR^{(20)} B_2^{|\cdot|} \cdots B_{r-1}^{|\cdot|} \sRR^{(r0)} \big| \varnothing\big\rangle \in \End\big(\sF^{\otimes r}\big) \]
is closely related to but is {\it not} exactly the higher-rank
Carlsson--Nekrasov--Okounkov Ext operator \cite{Carlsson2014}, for
which no explicit vertex operator formula is known. The diagonal
matrix elements match but off-diagonal ones differ by some explicit
factors. The higher-rank Ext operator is a well-studied object in part
due to its role in the AGT correspondence, see, e.g.,~\cite{Negut2018},
and it would be interesting to relate known characterizations of it to
our explicit operator.
\end{MyParagraph}

\begin{MyParagraph}
Taking the trace of \eqref{eq:instanton-RR-operator-matrix-elements}
gives
\begin{equation} \label{eq:instanton-RR-operator-trace}
 Z_r\big(q,t^{-1}; -Q\sqrt{qt}, A, A\vec B\big) = \tr_{\sF^{\otimes r}} Q^{|\cdot|} \big\langle\varnothing \big| \sRR^{(10)} B_1^{|\cdot|} \sRR^{(20)} B_2^{|\cdot|} \cdots B_{r-1}^{|\cdot|} \sRR^{(r0)} \big| \varnothing\big\rangle_0,
\end{equation}
where $\inner*{-, -}_0$ means the matrix element is taken in the
$0$-th tensor factor. We will construct different operators $\sRR^H$
and $\sRR^V$ which both satisfy \eqref{eq:instanton-RR-operator-trace}
up to mild changes of variables (nb. the discussion of
Section~\ref{sec:RR-spectrum-vs-trace}). The key idea
(Theorem~\ref{thm:adjoint-matter-independence-of-preferred-direction})
is that the diagrams in~\eqref{eq:adjoint-matter-diagram} remain the
same under {\it any} choice of preferred direction for the refined
vertices. In particular, $\sRR^H$ (resp.~$\sRR^V$) arises from
horizontal (resp. vertical) preferred direction. The effects, not
always trivial, of changing the preferred direction can be
investigated quite generally for toric diagrams possibly with some
pairs of parallel half-edges glued together (in a non-toric way), and
so Section~\ref{sec:dependence-on-preferred-direction} is of independent interest.
\end{MyParagraph}

\subsection{An explicit operator formula}
\label{sec:explicit-operator-formula}

\begin{MyParagraph}\label{sec:quantum-toroidal-gl1-details}
The algebra $U_{q,t}\big(\hhat{\lie{gl}}_1\big)$ is complicated; see
\cite{Schiffmann2012} for various presentations. For us, it suffices
to know that it contains two special subalgebras:
\begin{itemize}\itemsep=0pt
\item the ``horizontal'' Heisenberg subalgebra, with generators
 $\{\alpha_n\}_{n \in \bZ}$ which in terms of power-sum polynomials
 $p_k \in \sF$ act as
 \[ \alpha_n = n\frac{\di}{\di p_n}, \qquad \alpha_{-n} = p_n, \qquad \forall n > 0; \]
\item the ``vertical'' commutative subalgebra, with generators
 $\{H_n\}_{n \in \bZ}$ which act on fixed points as $H_n \cdot \cO_\lambda
 = h_n(\lambda;q,t) \cO_\lambda$ with eigenvalue
 \begin{equation} \label{eq:tautological-weight-hilb}
 h_n(\lambda;q,t) \coloneqq \sign(n) \left(-\chi_\lambda(q^n, t^n) + \frac{1}{(1 - q^n)(1 - t^n)}\right)
 \end{equation}
 where
 $\chi_{\lambda}(q,t) \coloneqq \sum_{\square \in \lambda} q^{i(\square)} t^{j(\square)}$.
 One recognizes this as the weight of the tautological bundle of
 $\Hilb$ at the fixed point $\lambda$.
\end{itemize}
Experts will notice that the action of $\alpha_{-n}$ for $n > 0$ is
scaled by a factor $-\big(q^{n/2}-q^{-n/2}\big)\big(t^{n/2}-t^{-n/2}\big)$ from the
usual horizontal generators.
\end{MyParagraph}

\begin{MyParagraph}
Let
\begin{gather}
 \Gamma_+(z) \coloneqq \exp\left(\sum_{n>0} (qt)^{\frac{n}{2}} \big(t^{\frac{n}{2}} - t^{-\frac{n}{2}}\big) (H_n \otimes \alpha_n) \frac{z^n}{n}\right),\nonumber \\
 \Gamma_-(z) \coloneqq \exp\left(-\sum_{n>0} (qt)^{-\frac{n}{2}} \big(q^{\frac{n}{2}} - q^{-\frac{n}{2}}\big) (H_{-n} \otimes \alpha_{-n}) \frac{z^{-n}}{n}\right).\label{eq:gamma-vertex-operators}
 \end{gather}
Furthermore let $\sD$ be the diagonal operator whose entries are
$\big\langle \tilde\cO_\lambda | \sD | \tilde\cO_\lambda\big\rangle \coloneqq
\inner{\cO_\lambda | \cO_\lambda}^{-1}$.

\begin{Theorem} \label{thm:instanton-four-point-adjoint-as-horizontal-operator}
 \begin{equation} \label{eq:instanton-four-point-horizontal-operator-formula}
 \sRR = \sRR^H \coloneqq \sD^{(1)} \Gamma_-\big(\sqrt{qt}\big)^{-1} \Gamma_+ (1 )^{-1} \Gamma_- (-A ) \Gamma_+\big({-}1/A\sqrt{qt}\big),
 \end{equation}
 where a superscript $(-)^{(1)}$ means to act on the first tensor
 factor.
\end{Theorem}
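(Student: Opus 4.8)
The plan is to establish the explicit formula \eqref{eq:instanton-four-point-horizontal-operator-formula} by computing both sides of \eqref{eq:instanton-RR-operator-matrix-elements} for $r=1$ and $r=2$ and matching, then invoking the associativity/factorization of the refined vertex network to conclude the general case. Concretely, the starting point is the refined topological vertex formula \eqref{eq:refined-vertex} together with the gluing rules recalled in Section~\ref{sec:geometric-engineering-setup}; the plan is to expand the numerator $Z_r(q,t^{-1};A,A\vec B)_{\vec\lambda}$ of the adjoint-matter diagram \eqref{eq:adjoint-matter-diagram} with horizontal preferred direction, so that the sums over the internal vertical legs become, after the standard operator formalism (writing $C_{\lambda\mu\nu}$ in terms of skew Schur functions and converting each internal sum $\sum_\mu Q^{|\mu|}s_{\mu/\ast}(x)s_{\mu^t/\ast}(y)$ into a normal-ordered product of half-vertex operators $\Gamma_\pm$), a composition of vertex operators acting on $\sF\otimes\sF$. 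This is the ``standard operator formalism'' alluded to in the introduction, and is the technique of \cite{Awata2012, Iqbal2010}: each cap/leg of the four-point function contributes one $\Gamma_+$ or $\Gamma_-$, with arguments dictated by the K\"ahler and equivariant parameters on the corresponding edges.

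The key steps, in order: (i) write out $Z_r$ explicitly — this is the promised \eqref{eq:adjoint-diagram-formula-horizontal} — as an $r$-fold chain of refined vertices glued along horizontal legs carrying $\vec\lambda$, with the vertical legs carrying the internal sums governed by $A$ (framing/adjoint mass) and the $B_i$ (ratios $a_{i+1}/a_i$); (ii) for the single four-point block, resum the two internal vertical partitions against their K\"ahler weights using the Cauchy-type identities for skew Schur functions, identifying the result with a normal-ordered product $\Gamma_-(\sqrt{qt})^{-1}\Gamma_+(1)^{-1}\Gamma_-(-A)\Gamma_+(-1/A\sqrt{qt})$ acting between fixed-point (generalized Macdonald) states — here the vertical subalgebra generators $H_n$ enter precisely because the horizontal legs are in the fixed-point basis $\{\cO_\lambda^\otimes\}$ where the tautological weights \eqref{eq:tautological-weight-hilb} are the eigenvalues, while the $\alpha_n$ come from the power-sum expansion of the vertex Schur functions; (iii) track the normalization: the prefactor $(-\sqrt{qt})^{|\vec\lambda|}$ on the left and the framing factor $\inner{\cO_\lambda|\cO_\lambda}^{-1}$ — which is exactly the diagonal operator $\sD$ — account for the discrepancy between the refined-vertex normalization of the horizontal legs and the orthonormal-basis normalization demanded by Theorem~\ref{thm:instanton-RR-operator}; (iv) check that chaining $r$ copies with the $B_i^{|\cdot|}$ insertions in the $0$-th factor reproduces \eqref{eq:instanton-RR-operator-matrix-elements}, which follows formally once the $r=2$ case is verified because the vertex network is manifestly a composition.

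The main obstacle I expect is step (ii): correctly bookkeeping the arguments and the $q\leftrightarrow t$ asymmetry of the refined vertex so that the two $\Gamma_-$'s pick up arguments $\sqrt{qt}$ and $-A$ while the two $\Gamma_+$'s pick up $1$ and $-1/A\sqrt{qt}$, and in particular getting the half-integer powers of $qt$ and the signs right. The refined vertex is not symmetric in its three legs, and the substitution $t\mapsto t^{-1}$ in Proposition~\ref{prop:engineering-adjoint-matter} interacts with which vertices in the chain are $C_{\lambda\mu\nu}(q,t)$ versus $C_{\lambda^t\mu\nu}(t,q)$; a sign or a $\sqrt{qt}$ misplaced here propagates into the whole formula. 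A secondary subtlety is justifying the passage from a formal power series identity in $Q$ to an operator identity in $\End(\sF\otimes\sF)[[Q,A]]$ — but since both sides are defined as such formal series and we match all matrix elements in the basis $\{\varnothing\otimes\cO_{\vec\lambda}^\otimes\}$, which spans, this is automatic. I would therefore carry out the $r=1$ computation in full detail, extract the four-point block, and then simply remark that the $r\ge2$ cases follow by the same resummation applied edge-by-edge along the chain.
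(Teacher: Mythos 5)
Your proposal matches the paper's proof in all essential respects: the paper likewise starts from the explicit refined vertex \eqref{eq:refined-vertex}, factors $Z_r$ into four-point blocks via \eqref{eq:adjoint-diagram-formula-horizontal}, converts skew Schur functions into matrix elements of vertex operators (the paper does this via Lemma~\ref{lem:skew-schur-function-as-operator} from Kac and, crucially, its two-tensor-factor refinement Lemma~\ref{lem:skew-schur-function-as-operator-2}, which is precisely your observation that the arguments $q^{-\rho}t^\nu$ turn the $\alpha_n$-expansion into an $H_n\otimes\alpha_n$-expansion on $\sF\otimes\sF$), and finally absorbs the $\lambda$-dependent Macdonald prefactors \eqref{eq:four-point-macdonald-factors} into $(-\sqrt{qt})^{|\lambda|}$ (shifting $Q$) and $\inner{\cO_\lambda|\cO_\lambda}^{-1}$ (giving $\sD$). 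The only cosmetic difference is that the paper never explicitly carries out the $r=1,2$ check-then-chain step you suggest — since each four-point block is shown directly to be a matrix element of $\sRR^H$, general $r$ follows immediately from the factorization in \eqref{eq:adjoint-diagram-formula-horizontal} — and the paper reads off the unordered product rather than normal-ordering via Cauchy identities, but these are equivalent bookkeeping choices.
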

\end{MyParagraph}

\begin{MyParagraph}
Explicit formulas like~\eqref{eq:instanton-four-point-horizontal-operator-formula},
particularly the ``vertex operators'' $\Gamma_\pm(z)$, have some
precursors in the literature under the name of {\it
 Awata--Feigin--Shiraishi} or {\it Ding--Iohara--Miki intertwiners}~\cite{Awata2012}. In some sense, the main novelty in our computation
is the introduction of the auxiliary factor of $\sF$ where the
operators $H_{\pm n}$ (in \eqref{eq:gamma-vertex-operators}) from the
vertical subalgebra of $U_{q,t}\big(\hhat{\lie{gl}}_1\big)$ act. The
operators $\Gamma_\pm(z)$ involve a curious interaction of the
vertical and horizontal subalgebras, in contrast to objects living
only in one slope subalgebra
$U_{q,t}\big(\hat{\lie{gl}}_1\big) \subset U_{q,t}\big(\hhat{\lie{gl}}_1\big)$, e.g.,
the vertex operators in the R-matrix~\cite{Negut2015}. Alternatively,
one can view them as single-slope objects evaluated on
$\sF^V \otimes \sF$ instead of $\sF \otimes \sF$, where
$\Phi\colon U_{q,t}\big(\hhat{\lie{gl}}_1\big) \to \End\big(\sF^V\big)$ is the
``vertical'' Fock representation~\cite{Negut2020}.
\end{MyParagraph}

\begin{MyParagraph}
We now proceed with the proof of
Theorem~\ref{thm:instanton-four-point-adjoint-as-horizontal-operator}.
The refined vertex with preferred direction $\nu$ is
\begin{gather} \label{eq:refined-vertex}
 C_{\lambda\mu\nu}(q,t) \coloneqq \left(\frac{t}{q}\right)^{\frac{\|\mu\|^2 + \|\nu\|^2}{2}}\!\! q^{\frac{\kappa(\mu)}{2}} P_{\nu^t}(q,t) \sum_\eta \left(\frac{t}{q}\right)^{\frac{|\eta|+|\lambda|-|\mu|}{2}}\!\! s_{\lambda^t/\eta}\big(q^{-\rho}t^{-\nu}\big) s_{\mu/\eta}\big(t^{-\rho}q^{-\nu^t}\big),\!\!\!
\end{gather}
where the $s_{\lambda/\mu}(\vec x)$ are skew Schur functions,
$q^{-\rho} t^{-\nu}$ means
$\big(q^{1/2} t^{-\nu_1}, q^{3/2} t^{-\nu_2}, q^{5/2} t^{-\nu_3}, \ldots\big)$,
$\|\mu\|^2 \coloneqq \sum_i \mu_i^2$ and
$\kappa(\mu) \coloneqq \|\mu\|^2 - \|\mu^t\|^2$, and
\[ P_{\nu^t}(q,t) \coloneqq q^{\frac{\|\nu\|^2}{2}} \prod_{\square \in \nu} \frac{1}{1 - q^{\ell(\square)+1} t^{a(\square)}}. \]
Setting $\nu^{(0)} \coloneqq \nu^{(r)} \coloneqq \varnothing$, the
desired quantity is
\begin{equation} \label{eq:adjoint-diagram-formula-horizontal}
 Z_r\big(q,t^{-1}; Q, A, \vec B\big) = \sum_{\nu^{(1)}, \ldots, \nu^{(r-1)}} \prod_{i=1}^r B_i^{|\nu^{(i)}|} Z_{\nu^{(i-1)},\nu^{(i)t}}^H\big(q,t^{-1}; Q, A\big),
\end{equation}
where $Z_{\nu,\nu'}^H$ is the so-called {\it four-point diagram}
\begin{align}
 \includepic{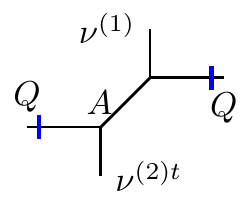}
\eqqcolon{} & Z_{\nu^{(1)}, \nu^{(2)t}}^H\big(q, t^{-1}; Q, A\big) \nonumber \\
={}& \sum_{\lambda,\mu} A^{|\mu|} Q^{|\lambda|} C_{\mu\nu^{(1)}\lambda}\big(q,t^{-1}\big) C_{\mu^t\nu^{(2)t}\lambda^t}\big(t^{-1}, q\big) \nonumber\\
={}& \sum_{\lambda,\mu,\eta_1,\eta_2} Q^{|\lambda|} A^{|\nu^{(2)}|} (qt)^{\frac{\|\lambda^t\|^2 - \|\lambda\|^2}{2}} P_\lambda\big(t^{-1}, q\big) P_{\lambda^t}\big(q, t^{-1}\big) \nonumber \\
 &\qquad\quad \ {}\times s_{\mu^t/\eta_1}\big(q^{-\rho} t^\lambda\big) s_{\nu^{(1)}/\eta_1}\big((qt)^{\frac{1}{2}} q^{-\lambda^t} t^\rho\big) \nonumber \\
 &\qquad\quad \ {}\times s_{\mu/\eta_2}\big(A q^{-\lambda^t} t^\rho\big) s_{\nu^{(2)t}/\eta_2}\big(A^{-1} (qt)^{-\frac{1}{2}} q^{-\rho} t^\lambda\big).\label{eq:four-point-horizontal-partition-function}
\end{align}
The superscript $H$ reminds us that the horizontal direction is
preferred. In the second equality above, we used the homogeneity
$z^{|\lambda|-|\mu|} s_{\lambda/\mu}(\vec x) = s_{\lambda/\mu}(z \vec
x)$ to absorb some factors of $A$ and $\sqrt{qt}$.
\end{MyParagraph}

\begin{MyParagraph}
The following key tool converts a skew Schur function into a matrix
element of an operator on~$\sF$.

\begin{Lemma}[{\cite[Chapter~14]{Kac1990}}] \label{lem:skew-schur-function-as-operator}
 \begin{equation} \label{eq:skew-schur-function-as-operator}
 s_{\lambda/\mu}(\vec x) = \inner*{s_\mu \bigg| \exp\left(\sum_{n > 0} p_n(\vec x) \frac{\alpha_n}{n}\right) \bigg| s_\lambda}.
 \end{equation}
\end{Lemma}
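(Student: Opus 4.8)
The plan is to recognize the operator $\exp\bigl(\sum_{n>0} p_n(\vec x)\,\alpha_n/n\bigr)$ as a vertex operator / half-vertex operator acting on the bosonic Fock space $\sF$, and then to compute its matrix elements in the Schur basis directly from the combinatorics of symmetric functions. First I would unwind the conventions of Section~\ref{sec:quantum-toroidal-gl1-details}: here $\alpha_n = n\,\partial/\partial p_n$ and $\alpha_{-n} = p_n$ for $n>0$, so that $[\alpha_m,\alpha_{-n}] = $ (a multiple of) $\delta_{mn}$ — the standard Heisenberg relations — and $\sF$ is a lowest-weight module generated by $s_\varnothing = 1$. The operator $A(\vec x) \coloneqq \exp\bigl(\sum_{n>0} p_n(\vec x)\,\alpha_n/n\bigr)$ is built purely from annihilation operators $\alpha_n = n\,\partial/\partial p_n$ ($n>0$), with coefficients $p_n(\vec x)$ regarded as scalars (the power sums of the alphabet $\vec x$).

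The key step is the observation that, since $\alpha_n$ acts as the derivation $n\,\partial/\partial p_n$, the operator $A(\vec x)$ is precisely the operator ``$\exp$ of a constant-coefficient vector field'', whose effect on any symmetric function $F(p_1,p_2,\dots)$ is the shift $p_n \mapsto p_n + p_n(\vec x)$; equivalently, $A(\vec x)$ acts by multiplication-then-projection against the Cauchy kernel $\prod_i \sum_{n\ge 0} h_n(\vec x)\, (\text{dual variable})$. Concretely, I would use the classical identity that, under the Hall inner product on $\sF$ in which Schur functions are orthonormal, the adjoint of multiplication by a symmetric function $g$ is the ``skewing'' operator $g^\perp$, and $\exp\bigl(\sum_{n>0} p_n(\vec x)\,p_n^\perp\bigr)$ — which is exactly $A(\vec x)$ after translating $\alpha_n/n = \partial/\partial p_n = p_n^\perp$ up to the pairing normalization — sends $s_\lambda \mapsto \sum_\mu s_{\lambda/\mu}(\vec x)\, s_\mu$ by the coproduct formula $s_\lambda[X + Y] = \sum_\mu s_{\lambda/\mu}(X)\, s_\mu(Y)$. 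Pairing this against $s_\mu$ (which is orthonormal) extracts the coefficient $s_{\lambda/\mu}(\vec x)$, which is exactly the claimed formula \eqref{eq:skew-schur-function-as-operator}. This is the content of \cite[Chapter~14]{Kac1990}, so the proof is really a matter of matching conventions; I would cite it and indicate this one-line verification rather than redo the boson-fermion correspondence.

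The main obstacle — and the only real content beyond citation — is bookkeeping of normalizations: the paper's $\alpha_n$ differ from the ``usual'' Heisenberg generators (the excerpt explicitly warns that $\alpha_{-n}$ is rescaled by $-(q^{n/2}-q^{-n/2})(t^{n/2}-t^{-t/2})$ in the quantum toroidal context, though in Lemma~\ref{lem:skew-schur-function-as-operator} we are in the purely classical symmetric-function setting where $\alpha_n = n\,\partial/\partial p_n$, $\alpha_{-n} = p_n$). I would check that with $\alpha_n/n = \partial/\partial p_n$ the exponentiated operator really is the Cauchy/coproduct operator and not its transpose or a $q,t$-deformed variant, so that no spurious $(qt)$-factors appear; the placement of the bra $\langle s_\mu|$ versus $\langle s_\mu|$ acting on the right is fixed by the requirement that $A(\vec x)$ be a lowering-type operator annihilating no state but lowering degree is consistent with $s_{\lambda/\mu} = 0$ unless $\mu \subseteq \lambda$. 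Once the normalization $\alpha_n/n \leftrightarrow p_n^\perp$ is pinned down, \eqref{eq:skew-schur-function-as-operator} follows immediately from $s_\lambda[X+Y] = \sum_\mu s_{\lambda/\mu}(X)\,s_\mu(Y)$ and orthonormality of $\{s_\mu\}$.
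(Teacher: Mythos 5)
The paper offers no proof of this lemma — it is stated as a known result with a citation to Kac's book — so there is no internal argument to compare against. Your proof is correct and is essentially the standard direct argument: since $\alpha_n/n = \partial/\partial p_n$ for $n>0$ and these commute, $A(\vec x) = \exp\bigl(\sum_{n>0} p_n(\vec x)\,\partial/\partial p_n\bigr)$ is the translation $p_n \mapsto p_n + p_n(\vec x)$, i.e.\ plethystic addition of alphabets; applying it to $s_\lambda$ gives $s_\lambda[Y + X] = \sum_\mu s_{\lambda/\mu}(X)\,s_\mu(Y)$, and pairing against the orthonormal $s_\mu$ extracts $s_{\lambda/\mu}(\vec x)$.

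One small bookkeeping slip, which does not affect the argument: under the Hall pairing (power sums satisfying $\langle p_n, p_n\rangle = n$) the skewing operator is $p_n^\perp = n\,\partial/\partial p_n = \alpha_n$, not $\alpha_n/n$. Your exponent $\sum p_n(\vec x)\,\alpha_n/n$ therefore equals $\sum p_n(\vec x)\,p_n^\perp/n$, not $\sum p_n(\vec x)\,p_n^\perp$ as written. Since you never actually use the $p_n^\perp$ identification — only the derivation form $\alpha_n/n = \partial/\partial p_n$ and the resulting translation property matter — the conclusion is unaffected; your own hedge ``up to the pairing normalization'' covers this. A cleaner version would drop the $p_n^\perp$ detour entirely and go straight from the translation property to the coproduct identity.
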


We will need two transformations that can be performed on
\eqref{eq:skew-schur-function-as-operator}, that leave the l.h.s.\
unchanged but modify the r.h.s.:
\begin{itemize}\itemsep=0pt
\item transposing the operator, to get $s_{\lambda/\mu}(\vec x) =
 \inner*{s_\lambda | \exp(\cdots \alpha_{-n} \cdots) | s_\mu}$;
\item applying the $\omega$-involution on $\sF$, to get
 $s_{\lambda/\mu}(\vec x) = \inner*{s_{\mu^t} | \exp(-\cdots p_n(-\vec x)
 \cdots) | s_{\lambda^t}}$.
\end{itemize}
\end{MyParagraph}

\begin{MyParagraph}
For arguments $\vec x = q^{-\rho} t^\nu$ or similar,
Lemma~\ref{lem:skew-schur-function-as-operator} for
$s_{\lambda/\mu}(\vec x)$ is better interpreted as a~matrix element of
an operator on $\sF \otimes \sF$.

\begin{Lemma} \label{lem:skew-schur-function-as-operator-2}
 \begin{gather*}
 s_{\lambda/\mu}(q^{-\rho} t^{\nu}) = \inner*{\tilde\cO_\nu \otimes s_\mu \bigg| \exp\left(-\sum_{n > 0} \big(qt)^{n/2} (t^{n/2} - t^{-n/2}\big) \frac{H_n \otimes \alpha_n}{n}\right) \bigg| \tilde\cO_\nu \otimes s_\lambda}, \\
 s_{\lambda/\mu}(q^{-\nu^t} t^\rho) = \inner*{\tilde\cO_\nu \otimes s_\mu \bigg| \exp\left(\sum_{n > 0} (qt)^{-n/2} \big(q^{n/2} - q^{-n/2}\big) \frac{H_{-n} \otimes \alpha_n}{n}\right) \bigg| \tilde\cO_\nu \otimes s_\lambda}.
 \end{gather*}
\end{Lemma}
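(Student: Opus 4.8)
The plan is to reduce both identities to Lemma~\ref{lem:skew-schur-function-as-operator} by recognizing the specialized arguments $q^{-\rho}t^\nu$ and $q^{-\nu^t}t^\rho$ as producing exactly the eigenvalues $h_n(\nu;q,t)$ of the vertical generators $H_n$, up to the normalization factors appearing in \eqref{eq:gamma-vertex-operators}. Concretely, I would start from \eqref{eq:skew-schur-function-as-operator} with $\vec x = q^{-\rho}t^\nu = (q^{1/2}t^{-\nu_1}, q^{3/2}t^{-\nu_2}, \ldots)$ — wait, one must be careful with signs: the statement writes $t^\nu$ with positive exponent, so $\vec x = (q^{1/2}t^{\nu_1}, q^{3/2}t^{\nu_2},\ldots)$ — and compute the power sums $p_n(\vec x) = \sum_{i\ge 1} q^{n(i-1/2)} t^{n\nu_i}$. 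The key computation is to re-sum this: splitting off the finitely many boxes of $\nu$ and comparing to the empty partition, one gets $p_n(q^{-\rho}t^\nu) = \frac{q^{n/2}}{1-q^n} + (t^n - 1)\sum_i q^{n(i-1/2)}\sum_{j=0}^{\nu_i - 1}\cdots$, and after bookkeeping this becomes exactly $-(qt)^{n/2}(t^{n/2}-t^{-n/2})\, h_n(\nu;q,t)$ with $h_n$ as in \eqref{eq:tautological-weight-hilb}, where the cross-terms $\chi_\nu(q^n,t^n) = \sum_{\square\in\nu} q^{n\,i(\square)}t^{n\,j(\square)}$ arise precisely from the boxes of the Young diagram. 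This is the one genuinely computational step and also the main obstacle: getting the normalization $(qt)^{n/2}(t^{n/2}-t^{-n/2})$ and the overall sign correct requires carefully matching the conventions for $i(\square), j(\square)$ (whether they start at $0$ or $1$) used in defining $\chi_\lambda$ and $\Taut$.

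Once this power-sum identity is established, the first formula follows immediately: since $H_n \cdot \cO_\nu = h_n(\nu;q,t)\,\cO_\nu$ and hence $H_n$ acts on the orthonormal vector $\tilde\cO_\nu$ with the same eigenvalue, we have
\[
\exp\left(-\sum_{n>0}(qt)^{n/2}(t^{n/2}-t^{-n/2})\frac{H_n\otimes\alpha_n}{n}\right)
\]
acting on $\tilde\cO_\nu\otimes s_\lambda$ as $\tilde\cO_\nu\otimes \exp\left(\sum_{n>0} p_n(q^{-\rho}t^\nu)\frac{\alpha_n}{n}\right)s_\lambda$, using the eigenvalue identity from the previous paragraph. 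Taking the matrix element against $\tilde\cO_\nu\otimes s_\mu$ and invoking $\inner{\tilde\cO_\nu,\tilde\cO_\nu}=1$ together with Lemma~\ref{lem:skew-schur-function-as-operator} gives the claim.

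For the second formula, the argument is $q^{-\nu^t}t^\rho$; the cleanest route is to apply the $\omega$-involution transformation of Lemma~\ref{lem:skew-schur-function-as-operator} noted in the previous \texttt{MyParagraph}, which replaces $s_{\lambda/\mu}(\vec x)$ by $s_{\mu^t/\lambda^t}$ evaluated with power sums $-p_n(-\vec x)$ — or, more directly, observe that conjugating the partition swaps the roles of $q$ and $t$ in the specialization, so $p_n(q^{-\nu^t}t^\rho)$ is obtained from $p_n(q^{-\rho}t^\nu)$ by the formal exchange $q\leftrightarrow t$. Under that exchange, $h_n(\nu;q,t)$ is \emph{not} symmetric, but the generator $H_{-n}$ is designed precisely to carry the eigenvalue appearing on the negative side; using $H_{-n}\cdot\cO_\nu = h_{-n}(\nu;q,t)\cO_\nu$ with $h_{-n} = -(-\chi_\nu(q^{-n},t^{-n}) + \frac{1}{(1-q^{-n})(1-t^{-n})})$ and matching against $p_n(q^{-\nu^t}t^\rho) = (qt)^{-n/2}(q^{n/2}-q^{-n/2})\,h_{-n}(\nu;q,t)$ yields the stated exponential. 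Again the substantive content is the power-sum-to-eigenvalue identity; the operator-theoretic packaging is then formal.
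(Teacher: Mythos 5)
Your proposal is correct and follows essentially the same route as the paper's proof: compute the power sums $p_n$ at the specialized arguments, match them against the prefactors times the eigenvalues $h_{\pm n}(\nu;q,t)$ of $H_{\pm n}$ on $\tilde\cO_\nu$, and then substitute into Lemma~\ref{lem:skew-schur-function-as-operator}. The only wrinkle is your suggested detour via the $\omega$-involution for the second identity (and the claim that it is ``the cleanest route''); that transformation changes the Schur indices to transposes and is not what the lemma statement needs, and you correctly pivot to the direct power-sum computation, which is exactly what the paper does (the relevant substitution being $(q,t,\nu)\mapsto(t^{-1},q^{-1},\nu^t)$ combined with the symmetry $h_n(\nu^t;t,q)=h_n(\nu;q,t)$, not a bare $q\leftrightarrow t$ swap).
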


\begin{proof}
 For $n > 0$,
 \begin{gather*}
 p_n\big(q^{-\rho} t^{\nu}\big) = q^{n/2} \sum_{k>0} q^{n(k-1)} t^{n\nu_k} = -(qt)^{n/2} \big(t^{n/2} - t^{-n/2}\big) h_n(\nu;q,t),
 \end{gather*}
 where $h_n(\nu; q,t)$ is the eigenvalue of $H_n$ on the fixed point
 $\cO_\nu$, as in~\eqref{eq:tautological-weight-hilb}. Similarly,
 \[ p_n\big(q^{-\nu^t} t^\rho\big) = (qt)^{-n/2} \big(q^{n/2} - q^{-n/2}\big) h_{-n}(\nu^t;t,q), \]
 but quite clearly $h_n(\nu^t;t,q) = h_n(\nu;q,t)$.
\end{proof}
\end{MyParagraph}

\begin{MyParagraph}
It is clear that the terms in the last two lines of~\eqref{eq:four-point-horizontal-partition-function} eventually become
the vertex operators, via Lemma~\ref{lem:skew-schur-function-as-operator-2}, so the terms in the
first line must be absorbed somewhere. Compute that
 \begin{align}
 (qt)^{\frac{\|\lambda^t\|^2 - \|\lambda\|^2}{2}} P_{\lambda^t}\big(q,t^{-1}\big) P_{\lambda}\big(t^{-1},q\big)
 &= q^{\frac{\|\lambda^t\|^2}{2}}t^{-\frac{\|\lambda\|^2}{2}} \prod_{\square \in \lambda} \frac{1}{1 - q^{\ell(\square)+1} t^{-a(\square)}} \frac{1}{1 - q^{\ell(\square)} t^{-a(\square)-1}} \nonumber\\
 &= (-1)^{|\lambda|} (qt)^{\frac{|\lambda|}{2}} \frac{1}{\inner{\cO_\lambda | \cO_\lambda}} \label{eq:four-point-macdonald-factors}
 \end{align}
using that $\sum_{\square \in \lambda} \ell(\square) = \big(\|\lambda^t\|^2
- |\lambda|\big)/2$ and similarly for $a(\square)$. The resulting
$\big({-}\sqrt{qt}\big)^{|\lambda|}$ term is absorbed into the K\"ahler variable
$Q$. Finally, the term $\inner{\cO_\lambda | \cO_\lambda}^{-1}$ comes
from $\sD$.
\end{MyParagraph}

\subsection{Dependence on preferred direction}\label{sec:dependence-on-preferred-direction}

\begin{MyParagraph}
The way in which diagrams such as the ones in
\eqref{eq:adjoint-matter-diagram} depend on the choice of preferred
direction has been raised \cite{Iqbal2009} and studied, e.g.,~\cite{Awata2013}, since the introduction of the refined vertex. A good
way to study this dependence is to relate the refined vertex
$C_{\lambda\mu\nu}(q,t)$ to the more symmetric K-theoretic
Pandharipande--Thomas (PT) vertex $V_{\lambda\mu\nu}(x,y,z;\fq)$.

For a review of (equivariant) K-theoretic DT and PT theory,
\cite{Okounkov2017} should suffice.

\begin{Definition}
 Let $\sT \coloneqq (\bC^\times)^3$ with coordinates $(x, y, z)$.
 Given a function $f(x,y,z)$ on $\sT$ and a cocharacter
 $\sigma(u) = \big(u^a, u^b, u^c\big) \in \sT$, let
 \[ \lim_\sigma f \coloneqq \lim_{u \to 0} f(\sigma(u)). \]
 If $a+b+c = 0$, i.e., $\sigma$ preserves $\kappa \coloneqq xyz$, and
 there is some permutation of $a$, $b$, $c$ so that $a \gg b > 0$, then
 $\sigma$ is called an {\it index limit}.
\end{Definition}

\begin{Theorem}[{\cite[Theorem~2]{Nekrasov2016a}}] \label{thm:PT-to-refined-vertex}
 Assume the DT/PT conjecture {\rm \cite[formula~(16)]{Nekrasov2016a}} for
 equivariant K-theoretic vertices. Then
 \begin{equation*} %\label{eq:index-limit-of-vertex}
 (\text{\rm prefactor}) \cdot C_{\lambda\mu\nu}\big({-}\fq\kappa^{\frac{1}{2}}, -\fq\kappa^{-\frac{1}{2}}\big) = \lim_{\sigma_V} V_{\lambda\mu\nu}(x, y, z; \fq)
 \end{equation*}
 for the index limit $\sigma_V(u) \coloneqq \big(u^N, u^{-N-1}, u\big)$ with
 $N \gg 0$.
\end{Theorem}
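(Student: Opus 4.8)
The plan is to realize both sides as index limits of the equivariant K-theoretic Donaldson--Thomas vertex and then thread the DT/PT conjecture through the limit. Recall that the DT vertex $\mathsf{W}_{\lambda\mu\nu}(x,y,z;\fq)$ is the generating series $\sum_n \fq^n \chi_\sT\big(\Hilb^n(\bC^3)_{\lambda\mu\nu},\hat{\cO}^{\vir}\big)$ over finite subschemes of $\bC^3$ asymptotic to $\lambda,\mu,\nu$ along the three axes; by $\sT$-localization it is a sum over plane partitions $\pi$ with those asymptotics of $(-\fq)^{|\pi|}$ times a product over boxes of symmetrized weights $\tfrac{w^{1/2}-w^{-1/2}}{\cdots}$ formed from the tangent--obstruction weights at $\pi$.

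First I would compute $\lim_{\sigma_V}\mathsf{W}_{\lambda\mu\nu}$ for $\sigma_V(u)=(u^N,u^{-N-1},u)$ with $N\gg 0$. Since $a+b+c=0$ the combination $\kappa=xyz$ is $\sigma_V$-fixed, and $\sigma_V$ is an index limit in the sense of the Definition (permute to $(N,1,-N-1)$), so the limit exists term by term in $\fq$. As $u\to 0$ each localization factor degenerates to a monomial or to $1$ according to the sign of its $u$-weight, with the effect that the plane-partition sum collapses onto its $z$-slices: the preferred ($z$, weight $u$) direction survives as a genuine box-counting variable while the transverse $\bC^2_{x,y}$-weights freeze, and the sum over $\pi$ reorganizes into a product of transfer matrices, each slice contributing a skew Schur function evaluated at $q^{-\rho}t^{-\nu}$ or $t^{-\rho}q^{-\nu^t}$. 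This is precisely the combinatorial content of the defining formula~\eqref{eq:refined-vertex} for $C_{\lambda\mu\nu}$; matching the two surviving parameters against $q=-\fq\kappa^{1/2}$, $t=-\fq\kappa^{-1/2}$ and collecting the residual monomial (from the framing of $\hat{\cO}^{\vir}$, the $\kappa$-powers, and the leg factor $P_{\nu^t}$) gives
\[ (\text{prefactor})\cdot C_{\lambda\mu\nu}\big({-}\fq\kappa^{\frac{1}{2}},-\fq\kappa^{-\frac{1}{2}}\big) = \lim_{\sigma_V}\mathsf{W}_{\lambda\mu\nu}(x,y,z;\fq). \]
The asymmetry of $C_{\lambda\mu\nu}$ in its three legs is visibly the shadow of the asymmetry of $\sigma_V$, which singles out $z$.

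Next I would invoke the DT/PT conjecture for equivariant K-theoretic vertices, formula~(16) of~\cite{Nekrasov2016a}, in the schematic form $\mathsf{V}_{\lambda\mu\nu}=\mathsf{W}_{\lambda\mu\nu}/\mathsf{W}_{\varnothing\varnothing\varnothing}$, an identity in the ring of $\fq$-series with coefficients rational in $x,y,z$. Since $\lim_{\sigma_V}$ is substitution of the cocharacter followed by $u\to0$, it is a ring homomorphism on the relevant localized ring and hence respects this factorization; moreover $\lim_{\sigma_V}\mathsf{W}_{\varnothing\varnothing\varnothing}$ is an explicit MacMahon-type factor depending only on $\kappa$ and $\fq$. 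Absorbing it into the prefactor turns the displayed identity into $(\text{prefactor})\cdot C_{\lambda\mu\nu}\big({-}\fq\kappa^{\frac{1}{2}},-\fq\kappa^{-\frac{1}{2}}\big)=\lim_{\sigma_V}\mathsf{V}_{\lambda\mu\nu}(x,y,z;\fq)$, which is the assertion.

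The main obstacle is the first step: rigorously taking the index limit of the K-theoretic DT vertex and identifying it with the refined vertex, i.e.\ controlling exactly which localization contributions survive $u\to0$, proving the collapse of the $3$d sum onto $2$d slices term by term, and pinning down the prefactor. This is essentially a K-theoretic and equivariant upgrade of the Iqbal--Kozcaz--Vafa and Nekrasov--Okounkov identification of the refined vertex with a distinguished slicing of the topological vertex; once it is in hand, the conjectural DT/PT input enters only as a black box and the remainder is bookkeeping of prefactors.
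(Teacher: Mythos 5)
The paper does not prove this statement: it is cited verbatim from \cite[Theorem~2]{Nekrasov2016a}, with the bracketed attribution in the theorem heading and no accompanying proof environment, so there is no internal argument for me to compare your proposal against. What you have written is a plausible reconstruction of the argument in the cited reference: compute $\lim_{\sigma_V}$ of the K-theoretic box-counting (DT) vertex by a transfer-matrix/slicing analysis in which the $z$-direction (the lone component of $\sigma_V$ with small positive exponent) survives as the preferred leg, identify the resulting slice sums with the skew Schur specializations in~\eqref{eq:refined-vertex} after matching $q = -\fq\kappa^{1/2}$ and $t = -\fq\kappa^{-1/2}$, then pass to the PT vertex by dividing out the degree-zero MacMahon factor via the DT/PT conjecture. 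That architecture is consistent with why the theorem's hypothesis is DT/PT rather than something intrinsic to PT theory.

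Two caveats. First, commuting $\lim_{\sigma_V}$ with the quotient $\mathsf{W}_{\lambda\mu\nu}/\mathsf{W}_{\varnothing\varnothing\varnothing}$ is not a formality of ring homomorphisms as you assert: one must know that numerator and denominator, coefficient by coefficient in $\fq$, are balanced rational functions whose poles stay away from the cone determined by $\sigma_V$, so that each limit exists and the denominator's limit is nonzero. This is exactly the structure the paper later isolates in Lemma~\ref{lem:index-limits-of-balanced-functions} and Proposition~\ref{prop:localization-poles}, and it has to be verified for the vertex itself, not merely invoked. Second, the step you flag as the main obstacle --- rigorously controlling which tangent--obstruction weights dominate as $u \to 0$, collapsing the plane-partition sum onto two-dimensional slices, and extracting the prefactor --- is not a technicality to be filled in later; it is the entire content of the theorem, and the sketch as written postpones all of it. Since the paper itself does not supply that computation either, your job here was essentially to re-derive the reference, and the proposal gives a correct skeleton but no flesh on the one load-bearing bone.
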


The PT vertex $V_{\lambda\mu\nu}$ is fully symmetric in its three
legs, upon permuting the variables~$x$,~$y$,~$z$ accordingly, so different
permutations of components of the cocharacter $\sigma_V$ produce refined
vertices with different preferred direction.

In general, therefore, refined partition functions $Z$ are index
limits of analogous {\it PT partition functions}
\begin{equation}\label{eq:PT-partition-function}
 Z^{\PT}(x,y,z; \fq, Q,A,\vec B) \in \bQ\big(x^{\frac{1}{2}},y^{\frac{1}{2}},z^{\frac{1}{2}}\big)((\fq))[[Q,A,\vec B]],
\end{equation}
built from the PT vertex $V_{\lambda\mu\nu}$ (and PT edge
contributions) in the same way that $Z$ is built from the refined
vertex $C_{\lambda\mu\nu}$. Changing preferred direction in $Z$
corresponds to changing the index limit $\sigma$ for $Z^{\PT}$, which
we can study geometrically. This sort of approach first appeared in~\cite{Arbesfeld2021} for toric geometries, and we now review (a mild,
non-toric generalization of) the arguments there in order to prove our
Theorem~\ref{thm:adjoint-matter-independence-of-preferred-direction}.
\end{MyParagraph}

\begin{MyParagraph}The new ingredient is the following geometric construction of~$Z_r^{\PT}$, which is no longer associated directly with an actual
toric $3$-fold. We continue to assume the DT/PT conjecture
\cite[formula~(16)]{Nekrasov2016a} throughout this subsection.
\begin{Definition}
 Let $\tilde X$ be the (infinite type) smooth toric $3$-fold given by
 the periodic toric polytope of Figure~\ref{fig:adjoint-geometry},
 where all edges are locally conifolds $\cO(-1)^{\oplus 2}$. Let $\sT
 = (\bC^\times)^3$ be its standard torus. Let $\Lambda_r \cong \bZ^2$
 be the translation action on the polytope with generators as shown
 in the figure, acting on coordinates as
 \begin{equation} \label{eq:translation-action-Xr}
 (x, y, z) \mapsto \big(\kappa x, \kappa^{-1} y, z\big),\qquad
 (x, y, z) \mapsto \big(x, \kappa^{-r} y, \kappa^r z\big)
 \end{equation}
 for $\kappa \coloneqq xyz$.
\end{Definition}

\begin{figure}
 \centering
 \includegraphics{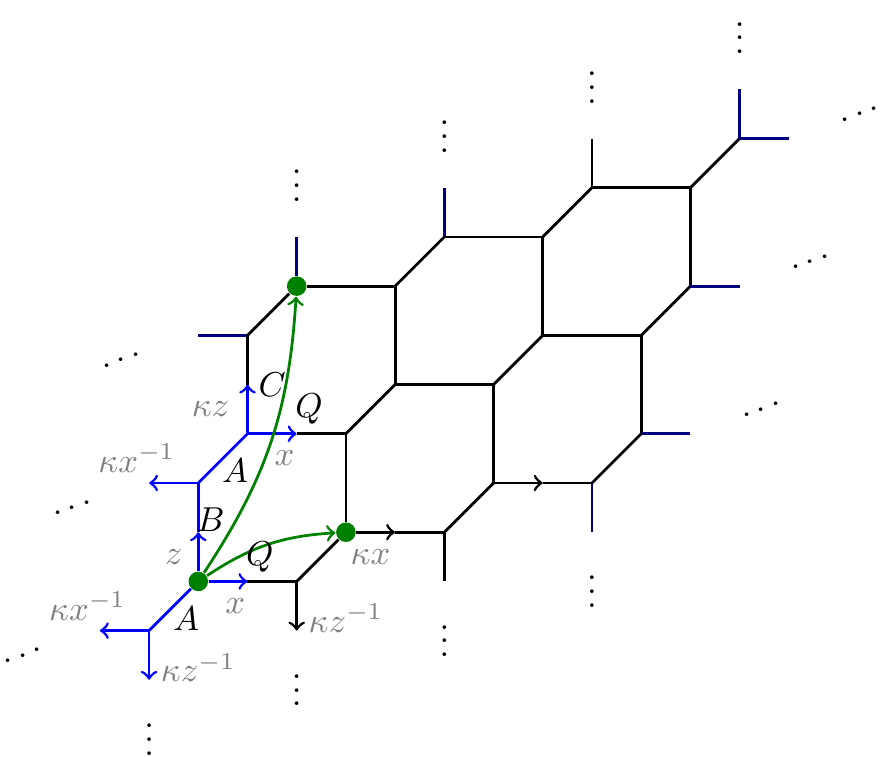}
 \caption{The periodic toric diagram defining $\tilde X$, with
 generators of the translation action $\Lambda_2$ in green and the
 fundamental region in blue. Some coordinates and some K\"ahler
 variables are indicated.}
 \label{fig:adjoint-geometry}
\end{figure}

\begin{Theorem}[\cite{Alexeev2002}]
 There exists a well-defined quotient
 \[ X_r \coloneqq \tilde X/\Lambda_r \to \Spec \bC[[\kappa]] \]
 such that the map to $\Spec \bC[[\kappa]]$ is proper and flat.
\end{Theorem}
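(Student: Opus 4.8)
The plan is to realize $X_r$ as a degenerating family of abelian varieties (fibered over a toric base) so that the theory of stable semiabelic pairs/varieties of Alexeev applies. First I would observe that the translation lattice $\Lambda_r\cong\bZ^2$ acts on $\tilde X$ by toric automorphisms: the two generators in \eqref{eq:translation-action-Xr} are cocharacters of the torus $\sT=(\bC^\times)^3$ composed with translations on the periodic polytope, and one checks directly that they generate a free $\bZ^2$ acting properly discontinuously on the polytope (the fundamental region is the blue region in Figure~\ref{fig:adjoint-geometry}). The key point is that both generators fix $\kappa=xyz$, so the quotient is naturally a family over the $\kappa$-line; more precisely, the function $\kappa$ descends to $\tilde X/\Lambda_r$ and gives the map to $\Spec\bC[[\kappa]]$.

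Next I would identify the generic and special fibers. On the locus $\kappa\neq 0$ the action of $\Lambda_r$ on each fiber is by translations that are \emph{free} (no fixed points once $\kappa$ is a unit, since the multiplicative shifts $\kappa^{\pm 1}$, $\kappa^{\pm r}$ are nontrivial), so the quotient of each $\kappa\neq 0$ fiber is a compact complex torus of dimension two — an abelian variety once one checks polarizability, which comes from the K\"ahler variables/ample cone data encoded in the polytope. At $\kappa=0$ the fiber degenerates to a cycle of toric surfaces glued along the periodic edges, exactly a stable semiabelic variety in the sense of Alexeev. Thus $X_r\to\Spec\bC[[\kappa]]$ is precisely the kind of one-parameter degeneration that Alexeev's compactification of the moduli of (semi)abelic pairs shows is proper and flat: properness is the valuative criterion, which is the statement that the family over the punctured disc extends uniquely across $\kappa=0$ to a stable semiabelic fiber, and flatness follows because the total space is an integral (indeed toric-locally Cohen--Macaulay) scheme dominating the regular base $\Spec\bC[[\kappa]]$.

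Concretely the steps are: (1) verify $\Lambda_r$ acts freely and properly discontinuously on the periodic polytope, with compact fundamental domain, so $\tilde X/\Lambda_r$ exists as a scheme (or algebraic space) of finite type over $\bC[[\kappa]]$; (2) check that $\kappa$ is $\Lambda_r$-invariant and its vanishing locus is the union of the periodic edge-divisors, so the map to $\Spec\bC[[\kappa]]$ is well-defined with the asserted fibers; (3) invoke \cite{Alexeev2002} — the quotient is a stable semiabelic degeneration, hence the moduli-theoretic properness and flatness apply verbatim. The main obstacle I anticipate is step (2) together with the polarizability check in step (3): one must confirm that the periodic polytope carries a $\Lambda_r$-invariant lattice polarization (equivalently, a consistent assignment of the K\"ahler variables $Q,A,\vec B$ to edge lengths) making the generic fiber a \emph{polarized} abelian surface rather than a merely analytic torus, since without the polarization Alexeev's theorem does not directly give properness. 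Once the invariant polarization is exhibited, everything else is a routine application of the semiabelic degeneration formalism.
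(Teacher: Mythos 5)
The paper offers no proof of this statement: it is stated as a theorem of Alexeev~\cite{Alexeev2002}, where $X_r$ arises as an instance of his general construction of degenerating families of abelian varieties from an arbitrary periodic toric polytope, the rank-$1$ prototype being the Tate curve $k^\times/q^{\bZ}\to\Spec k[[q]]$. Your route---identify the periodic polytope, check that $\Lambda_r$ preserves $\kappa$, then invoke Alexeev---is the same route the paper takes, so the real question is whether your sketch of how Alexeev's machinery applies is sound.

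There is a gap in your step~(1). A free, properly discontinuous action of $\Lambda_r$ on the \emph{polytope} does not by itself produce a scheme-theoretic quotient of $\tilde X$. On the locus where $\kappa$ is invertible, $\Lambda_r$ acts by multiplicative shifts by powers of $\kappa$, which is free but \emph{not} properly discontinuous in either the Zariski or the analytic topology; the orbits accumulate, and no naive quotient exists as a variety. The quotient must be formed $\kappa$-adically: one completes $\tilde X$ formally along $\kappa=0$, notes that the induced $\Lambda_r$-action on the resulting formal scheme is properly discontinuous (its reduced special fiber is a chain of toric surfaces with free finite quotient), takes the quotient in the category of formal schemes, and then algebraizes via the ample line bundle supplied by the polarization data, i.e., the $\Lambda_r$-periodic piecewise-affine convex function on the polytope. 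This is exactly Mumford's construction, extended to the relative and semiabelic setting by Faltings--Chai and Alexeev; the existence, properness, and flatness of the resulting $X_r\to\Spec\bC[[\kappa]]$ are the \emph{content} of the cited theorem, not a formal consequence of a properly discontinuous action plus the valuative criterion. Your remark that flatness over a DVR is automatic once $X_r$ is integral and finite type is correct, and your concern about exhibiting an invariant polarization is well placed (it is precisely the combinatorial input to the algebraization step); but the adic-quotient construction is the crux, and it should not be presented as if $\tilde X/\Lambda_r$ were a naive quotient.
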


This $X_r$ has already appeared explicitly in \cite{Kanazawa2019}, but
it is an instance of a far more general construction of (degenerating)
families of abelian varieties due to Alexeev \cite{Alexeev2002}, where
the initial combinatorial data is an {\it arbitrary} periodic toric
polytope. In general, the generic fiber is an abelian variety while
the special fiber is some union of toric varieties. The prototypical,
rank-$1$ example of Alexeev's construction is the Tate elliptic curve
$k^\times/q^\bZ \to \Spec k[[q]]$.
\end{MyParagraph}

\begin{MyParagraph}
The torus $\sT$ acting on $\tilde X$ can be identified with
$\bC_\kappa^\times \times (\bC^\times)^2$ where $\bC_\kappa^\times$
scales the coordinate~$\kappa$ on the base. On the quotient $X_r$,
localization with respect to $\bC_{\kappa}^\times$ restricts our
attention to the compact special fiber, which is a union of toric
varieties for the remaining $(\bC^\times)^2$. Hence the
$\sT$-equivariant PT theory of $X_r$ is well-defined, and the vertex
formalism is still applicable and yields exactly the desired non-toric
gluings.

\begin{Definition}
 Let $\PT(X_r)$ be the moduli scheme of PT pairs on $X_r$ and
 $\hat\cO^{\vir} \in K_{\sT}(\PT(X_r))$ be its symmetrized virtual
 structure sheaf; see, e.g., \cite[Section~3.2]{Okounkov2017} for details.
 (The symmetrization necessitates passing to a double cover of $\sT$,
 hence the square roots in \eqref{eq:PT-partition-function}.)
 Consider the $\sT$-equivariant K-theoretic pushforward
 \begin{equation}\label{eq:adjoint-PT-function}
 Z_r^{\PT}(x, y, z; \fq; Q, A, \vec B) \coloneqq \chi_{\sT}\big(\PT(X_r), \hat\cO^{\vir} \cdot \fq^{|\cdot|} Q^{\cdots} A^{\cdots} \vec B^{\cdots} C^{\cdots} \big)\big|_{C=0},
 \end{equation}
 where $\fq$ is the boxcounting parameter and the K\"ahler variables
 $Q, A, \vec B, C$ record curve classes as indicated in
 Figure~\ref{fig:adjoint-geometry}, in exactly the same way as for
 the refined partition function $Z_r$. To match with $Z_r$, we set $C
 = 0$ to disallow non-trivial curves (i.e., partitions) along those edges.
\end{Definition}

\begin{Remark}
 The identifications \eqref{eq:kahler-variable-identifications} of
 the K\"ahler variables $Q$ and $\vec A$ come from the relations of
 their corresponding curve classes in $H_2\big(\tilde X, \bZ\big)$, and
 therefore in $H_2(X_r, \bZ)$ as well, by examining the hexagons in
 Figure~\ref{fig:adjoint-geometry}. In contrast, it is {\it not}
 necessary to specialize the $B_i$ in any way.
\end{Remark}
\end{MyParagraph}

\begin{MyParagraph} The proof the following theorem
will occupy the remainder of this subsection.
\begin{Theorem} \label{thm:adjoint-matter-independence-of-preferred-direction}
 With the specializations \eqref{eq:kahler-variable-identifications},
 we have
 \[ Z_r = \lim_\sigma Z_r^{\PT} \]
 which furthermore is independent of the choice of index limit
 $\sigma$. In particular, \eqref{eq:adjoint-matter-diagram} can be
 computed under {\it any} choice of preferred direction.
\end{Theorem}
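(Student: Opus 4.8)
The plan is to establish the identity $Z_r = \lim_\sigma Z_r^{\PT}$ by first matching the vertex contributions and then arguing that the right-hand side does not depend on which index limit $\sigma$ one chooses. For the equality itself, I would expand $Z_r^{\PT}$ using the vertex/edge formalism on $X_r$, whose validity was arranged in the preceding paragraph: $\bC_\kappa^\times$-localization on the quotient $X_r$ concentrates the pushforward on the compact special fiber, which is the prescribed (periodic) union of toric conifolds. Then $\lim_\sigma$ applied termwise sends each PT vertex $V_{\lambda\mu\nu}$ to the corresponding refined vertex $C_{\lambda\mu\nu}$ by Theorem~\ref{thm:PT-to-refined-vertex} (with the substitution $t \mapsto t^{-1}$ accounted for by the $(-\fq\kappa^{1/2}, -\fq\kappa^{-1/2})$ specialization), and the PT edge contributions degenerate to the trivial conifold gluings we want, up to the prefactors which cancel globally in the ratio \eqref{eq:adjoint-matter-diagram}. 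The combinatorics of edges and K\"ahler variables was set up precisely so that the non-toric gluings match \eqref{eq:adjoint-diagram-formula-horizontal}; the identifications \eqref{eq:kahler-variable-identifications} come from the curve-class relations in $H_2(X_r,\bZ)$ as noted in the Remark.

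The independence of $\sigma$ is the conceptual heart. Here I would argue geometrically rather than combinatorially: the quantity $\lim_\sigma \chi_\sT(\PT(X_r), \hat\cO^{\vir}\cdots)$ is, before taking the limit, a genuine $\sT$-equivariant K-theoretic invariant of the proper family $X_r \to \Spec\bC[[\kappa]]$. Taking an index limit $\sigma$ (a cocharacter preserving $\kappa = xyz$ with one coordinate $\gg$ the others) is a rigidity/localization operation: the virtual tangent space of $\PT(X_r)$ is a sum of $\sT$-weights, and because $X_r$ is \emph{compact} over the base (after $\bC_\kappa^\times$-localization), the Euler characteristic $\chi_\sT(\PT(X_r),\hat\cO^{\vir}\cdots)$ is already a well-defined Laurent polynomial-type expression in the remaining torus variables, with no need for a limit. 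Thus any two index limits $\sigma_H$ (horizontal preferred direction) and $\sigma_V$ (vertical) give the same answer, because each simply evaluates the same honest class; the limit only \emph{looks} necessary at the level of individual vertices, which are non-compact pieces. This is exactly the strategy of \cite{Arbesfeld2021}, the only new point being that $X_r$ is not the skeleton of a smooth projective toric $3$-fold but Alexeev's degenerating abelian family, which is still proper and flat over $\Spec\bC[[\kappa]]$ — enough for rigidity.

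Concretely, the steps in order: (i) recall that $\PT(X_r)$ has a symmetrized virtual structure sheaf and that $\bC_\kappa^\times$-localization identifies $\chi_\sT$ with a sum over $\sT$-fixed PT pairs supported on the special fiber; (ii) check that the special fiber's toric strata, together with the translation action $\Lambda_r$, reproduce exactly the periodic diagram of Figure~\ref{fig:adjoint-geometry}, so that the vertex/edge decomposition of $Z_r^{\PT}$ is literally \eqref{eq:adjoint-diagram-formula-horizontal} with $V$'s in place of $C$'s; (iii) invoke compactness of $X_r$ over the base to conclude $Z_r^{\PT}|_{\sigma}$ is independent of $\sigma$ — this is where properness of Alexeev's family is used; (iv) apply Theorem~\ref{thm:PT-to-refined-vertex} termwise under any single choice of $\sigma$ to identify the common value with $Z_r$, with prefactors cancelling in the normalized ratio \eqref{eq:adjoint-matter-diagram}; (v) conclude that \eqref{eq:adjoint-matter-diagram} may be evaluated with horizontal, vertical, or any other preferred direction, since all are index limits of the same $Z_r^{\PT}$.

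I expect step (iii) — rigidity/independence of the index limit — to be the main obstacle, and the reason the DT/PT conjecture is invoked. The subtlety is that $\PT(X_r)$, although the underlying $3$-fold is proper over $\Spec\bC[[\kappa]]$, has moduli that are only quasi-compact after further $\bC_\kappa^\times$-localization to the special fiber, and one must be careful that the symmetrized virtual sheaf and its pushforward behave well in this limit — i.e.\ that no contributions ``escape to infinity'' along the non-compact directions of the vertex pieces. The honest statement is that the \emph{assembled} partition function is a well-defined element of the ring \eqref{eq:PT-partition-function} independent of $\sigma$, even though the individual vertex building blocks are $\sigma$-dependent; making this precise requires tracking that the edge (conifold) factors supply exactly the convergence/cancellation needed, which is where following \cite{Arbesfeld2021} closely — and the assumed K-theoretic DT/PT correspondence, needed even to write $C$ as a limit of $V$ — does the work.
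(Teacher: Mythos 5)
Your overall strategy matches the paper's: identify $Z_r^{\PT}$ with the PT invariants of the properly-degenerating family $X_r \to \Spec\bC[[\kappa]]$, use properness of this family as the key geometric input, apply Theorem~\ref{thm:PT-to-refined-vertex} termwise, and handle edge prefactors. You also correctly anticipate where the DT/PT conjecture enters and why the Alexeev family replaces a toric skeleton. However, your step (iii) --- the independence of the index limit $\sigma$ --- contains a genuine gap in the mechanism, even though the conclusion and the geometric input are right.

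You claim that because $X_r$ is proper over the base, the Euler characteristic $\chi_\sT(\PT(X_r),\hat\cO^{\vir}\cdots)$ is ``a well-defined Laurent polynomial-type expression in the remaining torus variables, with no need for a limit,'' so any two index limits ``simply evaluate the same honest class.'' This reasoning does not work. First, the partition function is \emph{not} a Laurent polynomial in the non-$\kappa$ variables: it is a sum of \emph{balanced} rational functions of the form \eqref{eq:balanced-rational-function}, whose poles $w_i$ are genuine. Second, even if it were a Laurent polynomial, an index limit of a Laurent polynomial is not ``evaluation'' --- index limits send some monomials to $0$, others to $\infty$, so the limit would generically diverge, and different cocharacters $\sigma$, $\tau$ would pick up different monomials. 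The balanced structure is what guarantees that each factor has a well-defined, finite index limit at all; and the limit is $\kappa^{\pm 1/2}$ depending on whether $\sigma$ sends the pole $w_i$ to $0$ or $\infty$, so it genuinely can depend on the choice of $\sigma$ unless the poles are constrained.

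The step you are missing is the pole-localization argument. The paper isolates this in Lemma~\ref{lem:index-limits-of-balanced-functions} (the index limit of a balanced function depends only on the images $\lim_\sigma w_i$ of its poles) and Proposition~\ref{prop:localization-poles} (Arbesfeld: poles of a localized $\chi_\sT$ occur only at weights $w$ whose fixed locus $\cM^{\ker w}$ is non-compact). The properness of $X_r \to \Spec\bC[[\kappa]]$ is used exactly to show, via the Hilbert--Chow map, that the only non-compact direction of $\PT(X_r)$ is along $\kappa$, hence all poles of the \emph{assembled} $Z_r^{\PT}$ (after cancellations between individual vertex/edge terms) occur only at integer powers of $\kappa$. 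Since index limits leave $\kappa$ fixed, $\lim_\sigma w_i = \lim_\tau w_i$ automatically for all such poles, and Lemma~\ref{lem:index-limits-of-balanced-functions} then gives $\lim_\sigma Z_r^{\PT} = \lim_\tau Z_r^{\PT}$. Without this chain, ``properness $\Rightarrow$ independence of $\sigma$'' is a non-sequitur. (You also gesture at edge factors ``supplying convergence,'' but in the paper the non-toric edge subtlety is handled separately via Lemma~\ref{lem:independence-of-kappa}, which is about the freedom to shift weights by powers of $\kappa$ when choosing a coordinate chart for a glued half-edge; it is unrelated to convergence of the assembled limit.)
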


 Note that the
construction of $Z^{\PT}$ and its independence of $\sigma$ is much
more generally applicable to any toric diagram with appropriate
non-toric gluings, not just our diagram for $Z_r$. In particular there
is no need to set $C=0$ in \eqref{eq:adjoint-PT-function}, in which
case there are obvious so-called ``triality'' symmetries in the
diagram of Figure~\ref{fig:adjoint-geometry}, see, e.g.,~\cite{Bastian2018}.
\end{MyParagraph}

\begin{MyParagraph}
The PT vertex $V_{\lambda\mu\nu}(x,y,z;\fq)$ (and PT edges) enjoys a
number of nice properties, chief among which is that it is a sum of
so-called ``balanced'' rational functions of the form
\begin{equation} \label{eq:balanced-rational-function}
 \prod_i \frac{(\kappa w_i)^{1/2} - (\kappa w_i)^{-1/2}}{w_i^{1/2} - w_i^{-1/2}}, \qquad \kappa \coloneqq xyz,
\end{equation}
for monomials $w_i = w_i(x,y,z)$. (This follows immediately from the
construction of $\hat\cO^{\vir}$.) We refer to the $w_i$ as {\it
 poles} of the rational function.

\begin{Lemma} \label{lem:index-limits-of-balanced-functions}
 Let $\sigma$ and $\tau$ be cocharacters such that
 $\kappa(\sigma(u))$ and $\kappa(\tau(u))$ are independent of~$u$.
 Let~$f$ be a balanced rational function, as in~\eqref{eq:balanced-rational-function}. Then
 \[ \lim_\sigma f = \lim_\tau f \]
 if $\lim_\sigma w_i = \lim_\tau w_i$ for every pole $w_i$ of $f$.
\end{Lemma}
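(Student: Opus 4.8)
The plan is to reduce the problem to a single balanced factor, since both $\lim_\sigma$ and $\lim_\tau$ are multiplicative: $\lim_\sigma(fg) = (\lim_\sigma f)(\lim_\sigma g)$ whenever the individual limits exist and are nonzero, and a product of balanced rational functions is balanced. So it suffices to treat one factor
\[
g(x,y,z) = \frac{(\kappa w)^{1/2} - (\kappa w)^{-1/2}}{w^{1/2} - w^{-1/2}}
\]
for a single monomial $w$. First I would observe that because $\kappa(\sigma(u))$ is a constant $\kappa_0$ independent of $u$ (and similarly $\kappa(\tau(u)) = \kappa_0'$; in fact in our setting $\kappa$ is an equivariant parameter untouched by the cocharacter, so $\kappa_0 = \kappa_0' = \kappa$), the numerator of $g\circ\sigma$ is $(\kappa_0 \cdot w(\sigma(u)))^{1/2} - (\kappa_0\cdot w(\sigma(u)))^{-1/2}$, whose $u\to 0$ behaviour is governed entirely by $\lim_{u\to 0} w(\sigma(u))$, i.e.\ by $\lim_\sigma w$. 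The same holds for the denominator and for $\tau$.

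The key step is then a case analysis on the value of $\lim_\sigma w \in \{0, \infty, \text{finite nonzero}\}$, which by hypothesis equals $\lim_\tau w$ and so is the same for both cocharacters. Write $w(\sigma(u)) = c\, u^{d}$ for the leading term, with $d \in \bZ$ and $c \neq 0$ (a Laurent monomial in the remaining equivariant variables). If $d > 0$ then $\lim_\sigma w = 0$; the factor $w^{1/2} - w^{-1/2}$ behaves like $-w^{-1/2} \sim -c^{-1/2}u^{-d/2}$, and similarly $(\kappa w)^{1/2}-(\kappa w)^{-1/2} \sim -(\kappa c)^{-1/2} u^{-d/2}$, so $\lim_\sigma g = \kappa^{-1/2}$. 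If $d < 0$, i.e.\ $\lim_\sigma w = \infty$, the dominant terms are instead $w^{1/2}$ and $(\kappa w)^{1/2}$, giving $\lim_\sigma g = \kappa^{1/2}$. If $d = 0$, then $\lim_\sigma w = c$ is finite and nonzero and $g\circ\sigma$ simply evaluates to the balanced expression in $c$, namely $\bigl((\kappa c)^{1/2}-(\kappa c)^{-1/2}\bigr)/\bigl(c^{1/2}-c^{-1/2}\bigr)$. In all three cases the answer depends only on the data $(\kappa, \lim_\sigma w)$, which by assumption is unchanged if $\sigma$ is replaced by $\tau$. Hence $\lim_\sigma g = \lim_\tau g$, and multiplying back over all the balanced factors of $f$ gives $\lim_\sigma f = \lim_\tau f$.

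The main thing to be careful about — the only real obstacle — is the bookkeeping of square roots and the possibility that some factor's limit is $0$ or $\infty$, which would break the naive "limit of a product is the product of limits" step. But the $d = \pm 1$ (more precisely $d \neq 0$) cases show each individual balanced factor has limit $\kappa^{\pm 1/2}$, never $0$ or $\infty$; so every factor contributes a finite nonzero value and the multiplicativity of $\lim_\sigma$ over the finite product defining $f$ is unproblematic. One should also check the degenerate case $w(\sigma(u)) \equiv 1$ identically, where $g$ is literally the constant $\kappa^{1/2}+\kappa^{-1/2}$ divided appropriately — here $g$ has a removable singularity and both limits agree trivially — but this is subsumed in the $d=0$ analysis by continuity. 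No compactness or geometry is needed; the statement is a purely elementary computation about Laurent monomials, which is exactly why it can be isolated as a lemma.
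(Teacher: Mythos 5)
The paper states this lemma without proof, treating it as elementary, and your argument is exactly the natural filling-in: reduce to a single balanced factor by multiplicativity, then case on whether $\lim_\sigma w$ is $0$, $\infty$, or finite, obtaining $\kappa^{-1/2}$, $\kappa^{1/2}$, or the evaluated balanced expression respectively, each of which depends only on $\kappa$ and $\lim_\sigma w$. One small correction: your remark that the ``degenerate case $w(\sigma(u))\equiv 1$'' has a removable singularity is not right --- if the monomial $w$ is identically $1$ then the factor $\big((\kappa w)^{1/2}-(\kappa w)^{-1/2}\big)/\big(w^{1/2}-w^{-1/2}\big)$ has a genuine pole (its denominator vanishes identically), so that case must simply be excluded by hypothesis rather than handled ``by continuity''; in the intended application the $w_i$ are nonzero, nontrivial weights of virtual normal bundles, so this never arises, but it is worth stating the exclusion rather than calling it removable.
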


Being built from PT vertices (and edges), $Z_r^{\PT}(x,y,z)$ is also a
sum of balanced rational functions. Hence the behavior of
$\lim_\sigma Z_r^{\PT}$ is controlled by the poles of $Z_r^{\PT}$, and
it suffices to locate these poles.

Note that individual components, e.g., the PT vertices
$V_{\lambda\mu\nu}^{\PT}$ comprising the sum $Z_r^{\PT}$, may have
more poles than $Z_r^{\PT}$ does; there will generally be some pole
cancellation which we will now explain.
\end{MyParagraph}

\begin{MyParagraph}\label{sec:PT-pole-cancellation} Pole-cancellation is controlled by the following geometric observation.
\begin{Proposition}[{\cite[Proposition 3.2]{Arbesfeld2021}}] \label{prop:localization-poles}
 Let~$\cM$ be a space with action by a torus $\sT$. Let~$\cF$ be a~$($virtual$)$ sheaf on $\cM$ and assume that $($virtual$)$ $\sT$-equivariant
 localization is applicable to~$\chi_{\sT}(\cM, \cF)$. Then its poles
 occur only at weights $w \in \Hom(\sT, \bC^\times)$ such that the
 fixed locus~$\cM^{\ker w}$ is non-compact.
\end{Proposition}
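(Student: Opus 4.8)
The plan is to argue directly from the virtual localization formula. Write
\[
\chi_\sT(\cM,\cF) = \sum_{F \subset \cM^\sT} \chi_\sT\!\left(F, \frac{\cF|_F}{\wedge^\bullet_{-1}(N_F^{\vir,\vee})}\right),
\]
where the sum is over connected components $F$ of the $\sT$-fixed locus and $N_F^{\vir}$ is the virtual normal ``bundle'' (a $K$-theory class). The only source of denominators — hence of poles in the torus weights — is the factor $\wedge^\bullet_{-1}(N_F^{\vir,\vee})$, since $F$ and $\cF|_F$ are genuinely $\sT$-fixed and contribute Laurent polynomials (or, if $F$ itself is noncompact, poles that are already accounted for by recursively applying localization on $F$; one reduces to the case where the fixed loci are compact, which for us they are). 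So the poles of $\chi_\sT(\cM,\cF)$ lie among the weights $w$ appearing in the $\sT$-decomposition of some $N_F^{\vir}$.

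Next I would identify which such $w$ actually produce poles that survive in the total sum. Fix a weight $w$ and consider the one-parameter subgroup $\sigma_w = \ker w \subset \sT$ (more precisely its identity component, a subtorus of corank one). If $\cM^{\ker w}$ is compact, then $\chi_{\ker w}(\cM^{\ker w}, \cF)$ is a well-defined Laurent polynomial in the remaining $\sT/\ker w \cong \bC^\times$ variable after inverting only that one character; equivalently, localizing $\chi_\sT(\cM,\cF)$ in \emph{two stages} — first down to $\cM^{\ker w}$, then to $\cM^\sT$ — shows that the partial sum $\chi_{\ker w}(\cM^{\ker w}, \ldots)$ is regular at $w = 1$ precisely because $\cM^{\ker w}$ being compact means no $1/(1-w^{\pm 1})$ is needed to define it. Therefore the apparent poles at $w$ coming from individual components $F$ must cancel in the sum over all $F \subset \cM^{\ker w}$. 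Contrapositively, a pole at $w$ can only survive if $\cM^{\ker w}$ is noncompact. This is the content of the statement.

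The main obstacle is making the two-stage localization argument rigorous in the \emph{virtual} setting: one needs that virtual localization is compatible with iteration (localize to $\cM^{\ker w}$, equip it with the induced virtual structure, then localize further), and that the induced virtual class on $\cM^{\ker w}$ is the ``right'' one so that $\chi_{\ker w}(\cM^{\ker w}, -)$ is manifestly regular at $w=1$ when $\cM^{\ker w}$ is compact. For the symmetrized virtual structure sheaves of Oh--Thomas type relevant here this is known, so I would cite the appropriate compatibility (e.g.\ from \cite{Oh2020} or the references in \cite{Okounkov2017}) rather than reprove it. Given that, the argument is exactly the one in \cite[Proposition~3.2]{Arbesfeld2021}, and I would simply refer to it; the only thing to check is that nothing in our mildly non-toric setup (the quotient $X_r$, its $\sT$-fixed loci) breaks the hypotheses, which it does not since after restricting to the compact special fiber everything is as in the toric case.
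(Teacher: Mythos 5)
Your argument is correct and is essentially the paper's proof: both hinge on the observation that if $\cM^{\ker w}$ is compact, localization with respect to a subtorus $\sT_w\subset\ker w$ produces denominators only from the virtual normal bundle to $\cM^{\sT_w}$, whose $\sT$-weights are nonzero on $\sT_w$ (hence $\neq w$, since $w$ vanishes there), so no pole at $w$ appears. The paper phrases this as a single localization to $\cM^{\sT_w}$ rather than your two-stage ``localize, then localize again'' picture, which sidesteps the need to invoke compatibility of iterated virtual localization; also note $\ker w$ has \emph{corank} one, so your $\sigma_w$ is a codimension-one subtorus, not a one-parameter subgroup.
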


\begin{proof}
 If $\cM^{\ker w}$ were compact, then equivariant localization with
 respect to the maximal torus $\sT_w \subset \ker w \subset \sT$
 produces poles only at $\sT$-weights occuring in the (virtual)
 normal bundle~$\cN_{\cM/\cM^{\sT w}}$, none of which vanish on
 $\sT_w$ (by definition of the normal bundle).
\end{proof}

In the case of PT theory of a $3$-fold $X$, the only way for
$\PT(X)^{\ker w}$ to be non-compact is if~$\ker w$ leaves some
non-compact direction in $X$ invariant~-- e.g., if $X$ is toric and
$w$ is an integer power of the weight of some half-edge in the toric
diagram~-- and there are complete curves in~$X$ which can escape to
infinity along that direction. This is a reflection of the fact that
there is a~Hilbert--Chow map
\[ \pi\colon \ \PT(X) \to \Chow(X), \]
which is proper on each component of the Chow variety of
$1$-dimensional cycles on $X$, and so non-compact directions in
$\PT(X)$ must arise from~$\Chow(X)$. We have arrived at the conclusion
of~\cite{Arbesfeld2021}: weights of such non-compact directions in~$X$
form walls in the cocharacter lattice, and~$\lim_\sigma Z^{\PT}$ can
only change when~$\sigma$ crosses a wall.
\end{MyParagraph}

\begin{MyParagraph}\label{sec:non-toric-gluings}
We now move beyond the results of \cite{Arbesfeld2021}, where this
pole-cancellation principle is applied only to toric $3$-folds. Our
$X_r$ is not toric, but the same argument as in
Section~\ref{sec:PT-pole-cancellation} applies. The only non-compact
direction in $X_r$ is along the base $\Spec \bC[[\kappa]]$, and so all
poles of $Z_r^{\PT}$ occur only at integer powers of $\kappa$. By
Lemma~\ref{lem:index-limits-of-balanced-functions}, such poles do not
affect index limits, which by definition leave $\kappa$ constant.
Hence $\lim_\sigma Z_r^{\PT}$ is independent of index limit $\sigma$,
as desired.
\end{MyParagraph}

\begin{MyParagraph}
Finally, it remains to verify that $Z_r = \lim_\sigma Z_r^{\PT}$. The
only non-trivial step is with computing the index limit of PT edge
contributions in $X_r$, for the half-edges which are glued together in
a non-toric way. This is done via the following trivial observation.

\begin{Lemma} \label{lem:independence-of-kappa}
 In the setting of
 Proposition~{\rm \ref{lem:index-limits-of-balanced-functions}}, modifying
 any weight $w_i$ by multiples of $\kappa$ does not affect
 $\lim_\sigma f$.
\end{Lemma}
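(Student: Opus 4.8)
The plan is to reduce to the behaviour of a single ``balanced'' factor under an index limit, and then to observe that such a factor only sees the weight of its pole along $\sigma$, which is insensitive to $\kappa$-shifts because $\sigma$ fixes $\kappa$.

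First I would note that, since $f = \prod_i \big((\kappa w_i)^{1/2} - (\kappa w_i)^{-1/2}\big)/\big(w_i^{1/2} - w_i^{-1/2}\big)$ is a product over its poles $w_i$, and the modification $w_i \mapsto \kappa^{m_i} w_i$ acts factor by factor with the $\kappa^{m}$-shifts generated by $w \mapsto \kappa^{\pm 1} w$, it suffices to treat a single factor $g_w \coloneqq \big((\kappa w)^{1/2} - (\kappa w)^{-1/2}\big)/\big(w^{1/2} - w^{-1/2}\big)$ and the replacement $w \mapsto \kappa w$. One checks, exactly as in the proof of Lemma~\ref{lem:index-limits-of-balanced-functions}, that $\lim_\sigma g_w = \kappa^{-1/2}$ if $\lim_\sigma w = 0$ and $\lim_\sigma g_w = \kappa^{1/2}$ if $\lim_\sigma w = \infty$; so $\lim_\sigma g_w$ depends on $w$ only through which of these two cases occurs. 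Now $\sigma$ being an index limit means $\kappa(\sigma(u))$ is constant, hence $\lim_\sigma \kappa = \kappa$ and $\lim_\sigma(\kappa w) = \kappa \cdot \lim_\sigma w$; multiplying by the nonzero constant $\kappa$ does not change whether this limit is $0$ or $\infty$. Therefore $\lim_\sigma g_{\kappa w} = \lim_\sigma g_w$, and taking products over the factors of $f$ finishes the argument.

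The one point to watch, and the only thing resembling an obstacle, is the remaining case $\lim_\sigma w \notin \{0, \infty\}$: there $\lim_\sigma w$ is a nonzero power of $\kappa$, $\lim_\sigma(\kappa w)$ differs from it, and $g_w$ genuinely changes under the shift (a $\kappa$-shift can even turn a pole at $w = 1$ into a finite factor). This case does not arise for the weights modified in the application: the translation action $\Lambda_r$ only reglues edges of $X_r$ that are transverse to its unique non-compact (base) direction, so their weights pair nontrivially with any index-limit cocharacter $\sigma$; and the only poles of $Z_r^{\PT}$ sitting along the base direction occur at integer powers of $\kappa$ and cancel among the constituent vertices and edges exactly as in Section~\ref{sec:PT-pole-cancellation}, so they never interfere with the index limit. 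Beyond tracking these square roots and checking into which case each pole falls, I do not expect any real difficulty.
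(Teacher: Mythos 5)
The core of your argument is the intended ``trivial observation'' and is correct: each balanced factor $g_w = \bigl((\kappa w)^{1/2} - (\kappa w)^{-1/2}\bigr)/\bigl(w^{1/2} - w^{-1/2}\bigr)$ has $\lim_\sigma g_w = \kappa^{-1/2}$ or $\kappa^{1/2}$ according as $\lim_\sigma w = 0$ or $\infty$, and multiplying $w$ by $\kappa$ preserves this dichotomy since $\kappa$ is fixed by $\sigma$. The paper gives no proof (it labels the lemma a ``trivial observation''), so there is nothing to compare against beyond this obviously-intended reasoning, which you have reproduced. Your caveat about $\lim_\sigma w \notin \{0,\infty\}$ is also well-spotted: for an index limit $\sigma$ that case forces $w = \kappa^m$, and there $\lim_\sigma g_{\kappa w} \neq \lim_\sigma g_w$ in general, so the lemma read completely literally would fail.

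The way you dispose of the exceptional case is off, though, in two respects. First, the poles of $Z_r^{\PT}$ at integer powers of $\kappa$ do \emph{not} cancel: Section~\ref{sec:PT-pole-cancellation} explains the cancellation of the \emph{other} potential poles, and the poles at powers of $\kappa$ are precisely the ones that survive; they are harmless only because they are parallel to every index limit. Second, there is a cleaner, self-contained reason the exceptional case never arises: the modifications come from coordinate changes $(x,y,z) \mapsto \bigl(\kappa^a x, \kappa^b y, \kappa^c z\bigr)$ with $a+b+c = 0$ (the $\Lambda_r$-action), under which any weight $w = \kappa^m$ is \emph{fixed}. Thus any weight genuinely shifted by a nonzero power of $\kappa$ is automatically not a power of $\kappa$, so $\lim_\sigma w \in \{0, \infty\}$ and your main argument applies. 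This replaces the hand-waving about edges transverse to the base direction with an exact statement.
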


In particular, this applies to coordinate changes
$(x, y, z) \mapsto \big(\kappa^a x, \kappa^b y, \kappa^c z\big)$ with
$a+b+c = 0$, and the action of $\Lambda_r$ on $\tilde X$ is generated
by substitutions of this form. In other words, $Z_r^{\PT}$ may change
under the substitutions of \eqref{eq:translation-action-Xr}, but
$\lim_\sigma Z_r^{\PT}$ does not. Hence, for each pair of half-edges
glued in a non-toric way, e.g., one in coordinates $(x,y,z)$ and
another in coordinates $(\kappa^a x, \kappa^{-a} y, z)$, we are free
to pick either of the two coordinate charts to write the edge term.

\begin{Example} \label{ex:PT-adjoint-rank-1}
 In our setting, all edges are local conifolds, i.e., having normal
 bundle $\cO(-1)^{\oplus 2}$. Let $E_\lambda$ be the PT edge
 contribution for a curve of class $\lambda$ on such an edge. Then in
 rank $r=1$,
 \begin{gather*}
 \lim_\sigma\left(\includepic{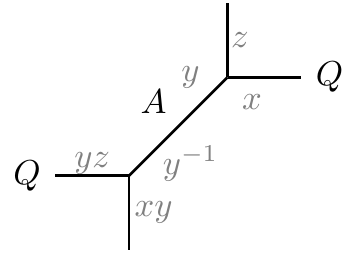}\right)
\eqqcolon \lim_\sigma\big(Z_1^{\PT}(x,y,z; \fq, Q,A,C=0)\big) \\
\qquad= \lim_\sigma\bigg(
  \sum_{\lambda,\mu} Q^{|\lambda|} A^{|\mu|}  V_{\lambda\mu\varnothing}(x,y,z;\fq) V_{\lambda^t\mu^t\varnothing}\big(yz, y^{-1}, xy; \fq\big)
  E_\lambda(x,y,z;\fq) E_\mu(y,z,x;\fq)
\bigg),
 \end{gather*}
 where, equally well, $E_\lambda(x,y,z; \fq)$ could be have been
 replaced by $E_{\lambda^t}\big(yz, y^{-1}, xy; \fq\big)$.
\end{Example}

In general, by Theorem~\ref{thm:PT-to-refined-vertex}, the PT vertex
$V_{\lambda\mu\nu}$ becomes the refined vertex $C_{\lambda\mu\nu}$
(with appropriate preferred direction) up to some prefactors which are
combinatorial quantities in~$\lambda$,~$\mu$, and~$\nu$. One can check
by explicit computation that
\[ \lim_\sigma E_\lambda = q^{\frac{\|\lambda\|^2}{2}} t^{\frac{\|\lambda^t\|^2}{2}} \]
exactly cancels these prefactors, e.g., the part of the prefactor from
the vertex which depends on~$\lambda$ is
$q^{-\|\lambda\|^2/2} (q/t)^{|\lambda|}$, and the vertex on the other
end of the edge contributes
$t^{-\|\lambda^t\|^2/2} (t/q)^{|\lambda^t|}$. See
\cite[Section~4.3]{Arbesfeld2021} for details. We conclude that prefactors
and edges don't matter, and ${\lim_\sigma Z_r^{\PT} = Z_r}$ is just a
combination of refined vertices, as claimed.
\end{MyParagraph}

\subsection{Another explicit operator formula}

\begin{MyParagraph}
From Theorem~\ref{thm:adjoint-matter-independence-of-preferred-direction},
the desired partition function $Z_r\big(q, t^{-1}; Q, A, \vec B\big)$ can also
be computed using four-point diagrams with {\it vertical} preferred
direction:
\begin{align*}
 \includepic{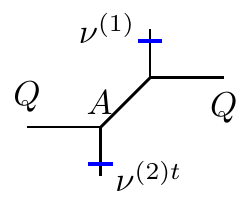}
\eqqcolon{} & Z_{\nu^{(1)}, \nu^{(2)t}}^V\big(q, t^{-1}; Q, A\big) \nonumber \\
={}& \sum_{\lambda,\mu,\eta_1,\eta_2} (QA)^{|\lambda|} \left(qt\right)^{\frac{\|\nu^{(2)t}\|^2 - \|\nu^{(1)}\|^2}{2}} P_{\nu^{(1)t}}\big(q,t^{-1}\big) P_{\nu^{(2)}}\big(q,t^{-1}\big) %\label{eq:four-point-vertical-partition-function}
  \\
 &\qquad\quad \ {}\times
 s_{\lambda^t/\eta_1}\big(A^{-1} q^{-\rho}t^{\nu^{(1)}}\big) s_{\mu/\eta_1}\big((qt)^{\frac{1}{2}} A q^{-\nu^{(1)t}}t^\rho\big) \nonumber \\
 &\qquad\quad \ {}\times
 s_{\lambda/\eta_2}\big(q^{-\nu^{(2)t}}t^{\rho}\big) s_{\mu^t/\eta_2}\big((qt)^{-\frac{1}{2}} q^{-\rho} t^{\nu^{(2)}}\big), \nonumber
\end{align*}
cf. \eqref{eq:four-point-horizontal-partition-function}. This is
completely distinct from the horizontal version $Z_{\nu^{(1)},
 \nu^{(2)t}}^H$. Note also that the K\"ahler variable $A$ is
distributed slightly differently this time.
\end{MyParagraph}

\begin{MyParagraph}
We introduce a new ``mixing'' operator
\[ \sM \coloneqq \sum_{\lambda, \mu} \ket{\tilde\cO_\lambda}\bra{\tilde\cO_\mu}. \]

\begin{Theorem} \label{thm:instanton-four-point-adjoint-as-vertical-operator}
 \begin{gather*}
  Z_r\big(q, t^{-1}; QA, A, -\vec B\sqrt{qt}\big) = \tr_{\sF^{\otimes r}} Q^{|\cdot|}
 \big\langle\varnothing \big| \big(\sRR^V\big)^{(01)}\! B_1^{|\cdot|} \big(\sRR^V\big)^{(02)}\! B_2^{|\cdot|} \cdots B_{r-1}^{|\cdot|} \big(\sRR^V\big)^{(0r)}\! \big| \varnothing\big\rangle,
 \end{gather*}
 where the $B_i^{|\cdot|}$ and matrix element are taken in the $0$-th
 tensor factor, and
 \begin{equation} \label{eq:instanton-four-point-vertical-operator-formula}
 \sRR^V = \sD^{(1)} \Gamma_+(1)^{-1} \Gamma_-\big(1/\sqrt{qt}\big)^{-1} \cdot \sM^{(1)} \cdot \Gamma_+\big({-}A\sqrt{qt}\big) \Gamma_- (-1/A ),
 \end{equation}
 where a superscript $(-)^{(1)}$ means to act on the first tensor factor.
\end{Theorem}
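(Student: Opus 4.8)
The plan is to run, essentially line for line, the argument that proved Theorem~\ref{thm:instanton-four-point-adjoint-as-horizontal-operator}, now starting from the explicit expansion of the \emph{vertical} four-point diagram $Z^V_{\nu^{(1)},\nu^{(2)t}}(q,t^{-1};Q,A)$ displayed just above (the vertical analogue of \eqref{eq:four-point-horizontal-partition-function}). First I would apply Lemma~\ref{lem:skew-schur-function-as-operator-2}, together with its transpose and $\omega$-involution variants, to each of the four skew Schur functions $s_{\lambda^t/\eta_1}(A^{-1}q^{-\rho}t^{\nu^{(1)}})$, $s_{\mu/\eta_1}((qt)^{1/2}Aq^{-\nu^{(1)t}}t^\rho)$, $s_{\lambda/\eta_2}(q^{-\nu^{(2)t}}t^\rho)$, $s_{\mu^t/\eta_2}((qt)^{-1/2}q^{-\rho}t^{\nu^{(2)}})$. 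The key structural change from the horizontal case is that the partitions appearing as exponents $t^{\nu^{(i)}}$, $q^{-\nu^{(i)t}}$ are now the \emph{glued} (vertical) legs $\nu^{(1)},\nu^{(2)}$ rather than the preferred leg, so they occupy the ``$H_n$-slot'' of the resulting vertex operators, while the preferred leg $\lambda$ (weighted by $(QA)^{|\lambda|}$) together with the internal legs $\mu,\eta_1,\eta_2$ live in the ``$\alpha_n$-slot''. This is precisely the tensor-slot swap visible in the statement: the factors enter as $(\sRR^V)^{(0i)}$ rather than $\sRR^{(i0)}$. Reading off powers of $q,t,A$ from the skew-Schur arguments via $p_n(q^{-\rho}t^\nu)=-(qt)^{n/2}(t^{n/2}-t^{-n/2})h_n(\nu;q,t)$ (as in the proof of Lemma~\ref{lem:skew-schur-function-as-operator-2}), the first pair of skew Schurs assembles into $\Gamma_+(-A\sqrt{qt})\Gamma_-(-1/A)$ and the second pair into $\Gamma_+(1)^{-1}\Gamma_-(1/\sqrt{qt})^{-1}$, matching \eqref{eq:instanton-four-point-vertical-operator-formula}.

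Next I would dispose of the Macdonald prefactor $(qt)^{(\|\nu^{(2)t}\|^2-\|\nu^{(1)}\|^2)/2}P_{\nu^{(1)t}}(q,t^{-1})P_{\nu^{(2)}}(q,t^{-1})$. Unlike the horizontal case, where \eqref{eq:four-point-macdonald-factors} was applied to the single preferred leg $\lambda$, here the two pieces $P_{\nu^{(1)t}}$ and $P_{\nu^{(2)}}$ sit on the two \emph{distinct} glued legs, and only ``half'' of each leg's norm factor is supplied by the present block, the other half coming from the neighbouring four-point block when the $r$ blocks are strung together along the $\nu^{(i)}$'s; the two halves combine through \eqref{eq:four-point-macdonald-factors} into a single $\inner{\cO_{\nu^{(i)}}|\cO_{\nu^{(i)}}}^{-1}$, which is absorbed into $\sD^{(1)}$, while the leftover $(-\sqrt{qt})^{|\nu^{(i)}|}$'s are soaked up by the Kähler variables $B_i$ — this is the origin of the substitution $\vec B \mapsto -\vec B\sqrt{qt}$, exactly as the analogous $\lambda$-prefactor produced the $Q \mapsto -Q\sqrt{qt}$ of the horizontal case. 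The $(QA)^{|\lambda|}$ on the preferred leg then becomes the $\tr_{\sF^{\otimes r}} Q^{|\cdot|}$ together with the $A$'s already present inside $\sRR^V$, accounting for the first argument $QA$ in the statement.

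The genuinely new ingredient — and the step I expect to be the main obstacle — is pinning down the mixing operator $\sM^{(1)}=\sum_{\lambda,\mu}\ket{\tilde\cO_\lambda}\bra{\tilde\cO_\mu}$ wedged between the two pairs of vertex operators in \eqref{eq:instanton-four-point-vertical-operator-formula}. It arises because in the vertical diagram a single four-point block carries \emph{two} glued legs $\nu^{(1)}\neq\nu^{(2)}$, which after Lemma~\ref{lem:skew-schur-function-as-operator-2} become the $H_n$-eigenvalue data of two \emph{different} fixed points; since the vertex operators $\Gamma_\pm$ are diagonal in that slot, one needs a genuinely off-diagonal operator to effect the transition $\tilde\cO_{\nu^{(1)}}\to\tilde\cO_{\nu^{(2)}}$, and tracking the conjugate mismatches in the $\lambda$-sum ($s_{\lambda^t/\eta_1}$ versus $s_{\lambda/\eta_2}$) and the internal $\mu$-sum ($s_{\mu/\eta_1}$ versus $s_{\mu^t/\eta_2}$) through their $\omega$-involutions and through the $s_\lambda$-vs-$\cO_\lambda$ change of basis is what collapses all of this to exactly the rank-one insertion $\sum_{\lambda,\mu}\ket{\tilde\cO_\lambda}\bra{\tilde\cO_\mu}$. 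Getting the orthonormalization (the tildes, hence the interplay with $\sD$) and the residual signs and $\sqrt{qt}$'s to land precisely right is the delicate part. Once the explicit formula for $\sRR^V$ is in hand, stringing the $r$ blocks together with the $B_i^{|\cdot|}$ insertions and taking the overall sum/trace reproduces $Z_r$ exactly as in \eqref{eq:adjoint-diagram-formula-horizontal} and \eqref{eq:instanton-RR-operator-trace}, and Theorem~\ref{thm:adjoint-matter-independence-of-preferred-direction} guarantees this is the same $Z_r$ as the one computed in the horizontal gauge.
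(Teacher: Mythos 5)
Your proposal follows exactly the route the paper takes, which is stated there quite tersely as ``completely analogous'' to the horizontal case plus two comments; you correctly isolate and explain both of those novelties, namely that the mixing operator $\sM$ is forced because each vertical four-point block carries $H_n$-eigenvalue data from two distinct fixed points $\nu^{(i)}$ and $\nu^{(i+1)}$, and that the computation~\eqref{eq:four-point-macdonald-factors} must now be applied to a $\nu = \nu^{(i)}$ pairing factors from two adjacent blocks, with the leftover $\big({-}\sqrt{qt}\big)^{|\nu^{(i)}|}$ absorbed into $\vec B$. The extra bookkeeping you supply (the tensor-slot swap producing $(\sRR^V)^{(0i)}$, the assignment of skew-Schur scales to $\Gamma_\pm$-arguments, the origin of $Q\mapsto QA$) is detail the paper omits and is consistent with its formulas, so this is the same proof.
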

\end{MyParagraph}

\begin{MyParagraph}
The proof of
Theorem~\ref{thm:instanton-four-point-adjoint-as-vertical-operator} is
completely analogous to that of
Theorem~\ref{thm:instanton-four-point-adjoint-as-horizontal-operator}
for $\sRR^H$, and so we only provide some comments for the purpose of
comparison.
\begin{itemize}\itemsep=0pt
\item The mixing operator $\sM$ is necessary because for any given
 four-point function, two of the skew Schur functions involve
 $\nu^{(i)}$ while the other two involve $\nu^{(i+1)}$.
\item The computation~\eqref{eq:four-point-macdonald-factors} now
 takes place not within a single four-point function, but across two
 different ones. Explicitly, $\lambda$ in~\eqref{eq:four-point-macdonald-factors} is replaced by $\nu
 \coloneqq \nu^{(i)}$ for a fixed $i$. The resulting
 $\big({-}\sqrt{qt}\big)^{|\nu|}$ term now must be absorbed by the K\"ahler
 variables $\vec B$.
\end{itemize}
\end{MyParagraph}

\begin{MyParagraph}
We make a few comments on the form of
\eqref{eq:instanton-four-point-vertical-operator-formula}. For general
rank $r$,
Theorem~\ref{thm:adjoint-matter-independence-of-preferred-direction}
guarantees that the operators
\[ \big\langle \varnothing \big| \big(\sRR^V\big)^{(01)} \cdots \big(\sRR^V\big)^{(0r)} \big| \varnothing\big\rangle, \; \big\langle\varnothing \big| \big(\sRR^H\big)^{(10)} \cdots \big(\sRR^H\big)^{(r0)} | \varnothing\big\rangle \in \End\big(\sF^{\otimes r}\big) \]
have the same trace, despite being manifestly different operators
(already evident from the lowest-order off-diagonal term). In the case
$r = 1$, the operator $\sM$ does nothing in
$\big\langle\varnothing | \sRR^V | \varnothing\big\rangle \in \End(\sF)$, and one can
check that the traces are equal explicitly. This rank-$1$ calculation
already appeared in \cite[Section~5]{Iqbal2010} in limited generality.
\end{MyParagraph}

\subsection[Properties of RR]{Properties of $\boldsymbol{\sRR}$}
\label{sec:properties-of-RR}

\begin{MyParagraph}
We collect here some properties of R-matrices~-- fusion
(Section~\ref{sec:RR-matrix-fusion}), unitarity
(Section~\ref{sec:RR-matrix-unitary}), and the Yang--Baxter equation
(Section~\ref{sec:RR-matrix-YBE})~-- and discuss their analogues for the
operator $\sRR$. In fact, it is productive to discuss more generally
the fully-equivariant four-point function in K-theoretic PT theory, so
let
\[ \sRR^{\PT}(A; x,y,z) \coloneqq \big(\big(\sRR^{\PT}\big)^{\mu'\nu'}_{\mu\nu}\big) \coloneqq \left(\includepic{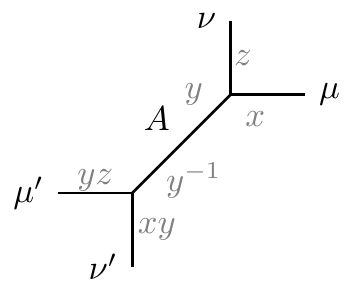}\right) \in \End(\sF \otimes \sF)((A)) \]
denote the PT partition function associated to the four-point diagram
shown. Explicitly, in the notation of
Example~\ref{ex:PT-adjoint-rank-1},
\[ \big(\sRR^{\PT}\big)^{\mu'\nu'}_{\mu\nu} \coloneqq \sum_\lambda A^{|\lambda|} V_{\mu\lambda\nu}(x,y,z;\fq) V_{\mu'\lambda^t\nu'}\big(yz,y^{-1},xy;\fq\big) E_\lambda(y,z,x;\fq). \]
\end{MyParagraph}

\begin{MyParagraph}\label{sec:RR-matrix-fusion}
Recall that if $\sR_{W,V_1} \in \End(W \otimes V_1)$ and $\sR_{W,V_2}
\in \End(W \otimes V_2)$ are R-matrices, then
\[ \sR_{W, V_1 \otimes V_2} = \sR_{W,V_1}^{(12)} \sR_{W,V_2}^{(13)} \in \End(W \otimes V_1 \otimes V_2). \]
In this way, $q$-characters of tensor products arise from R-matrices
of each tensor factor. Completely analogously, the factorization of
\eqref{eq:adjoint-matter-diagram} into four-point diagrams indicates
that we should define
\[ \sRR^{\PT}_{\sF, \sF \otimes \sF} \coloneqq \big(\sRR^{\PT}_{\sF, \sF}\big)^{(12)} B^{|\cdot|} \big(\sRR^{\PT}_{\sF, \sF}\big)^{(13)}, \]
and $qq$-characters of tensor products therefore also arise from
$\sRR$ of each tensor factor. Note that the K\"ahler variable $B$
becomes some combination of evaluation parameters or equivariant
variables under the identification
\eqref{eq:kahler-variable-identifications}.
\end{MyParagraph}

\begin{MyParagraph}\label{sec:RR-matrix-unitary}
Recall that if $\sR(u) \in \End(V \otimes V)$ is a trigonometric
R-matrix with spectral parameter~$u$, then it is important to study
whether
\[ \sR^{(21)}\big(u^{-1}\big) = \sR(u), \]
called {\it unitarity}. We propose that the following is the analogue
for $\sRR^{\PT}$.

\begin{Conjecture}[flop invariance]
 Let
 \[ \tilde\sRR^{\PT}(A; x,y,z) \coloneqq \frac{\sRR^{\PT}(A; x,y,z)}{\sRR^{\PT}(A; x,y,z)^{\varnothing\varnothing}_{\varnothing\varnothing}} \]
 be the normalization, as in \eqref{eq:adjoint-matter-diagram}. Then
 \[ \tilde\sRR^{\PT}\big(A^{-1}; x, xy, z/x\big)^{\mu'\nu}_{\mu\nu'} = \tilde\sRR^{\PT}(A; x,y,z)^{\mu'\nu'}_{\mu\nu}. \]
\end{Conjecture}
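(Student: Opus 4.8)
The plan is to prove the flop invariance conjecture geometrically, by exhibiting the two sides as PT partition functions of the \emph{same} $3$-fold (or two birationally equivalent $3$-folds related by a flop), just computed with respect to different choices of coordinates/Chow classes. Recall that the four-point diagram defining $\sRR^{\PT}(A;x,y,z)$ is a gluing of two PT vertices along an edge whose curve class is tracked by the K\"ahler variable $A$; concretely $(\sRR^{\PT})^{\mu'\nu'}_{\mu\nu} = \sum_\lambda A^{|\lambda|} V_{\mu\lambda\nu}(x,y,z;\fq) V_{\mu'\lambda^t\nu'}(yz,y^{-1},xy;\fq) E_\lambda(y,z,x;\fq)$. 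The substitution $(x,y,z)\mapsto(x,xy,z/x)$ is precisely a change of the $(\bC^\times)^3$-coordinates that preserves $\kappa = xyz$, i.e.\ an $SL$-type reparametrization of the torus, while $A\mapsto A^{-1}$ reverses the orientation of the internal edge. Geometrically this is exactly the flop of the corresponding local conifold edge. So the first step is to identify the local $3$-fold $Y$ underlying the four-point diagram, identify its flop $Y^+$, and write down the induced coordinate change and curve-class relabeling.

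The second step is to invoke (K-theoretic) \emph{flop invariance of PT theory}. For the PT vertex and PT edge contributions on a local conifold, the flop acts by transposing the partition on the flopped edge ($\lambda \leftrightarrow \lambda^t$), permuting the vertex legs, and inverting the associated K\"ahler variable; this is a standard computation with the explicit formula for $V_{\lambda\mu\nu}$ and the edge contributions $E_\lambda$, and it is exactly the kind of identity already exploited in Section~\ref{sec:dependence-on-preferred-direction} via Theorem~\ref{thm:PT-to-refined-vertex} and the balanced-rational-function formalism of \eqref{eq:balanced-rational-function}. Applying the flop to the internal edge of the four-point diagram exchanges the roles of the two outgoing legs $\nu$ and $\nu'$ (hence the swap of lower-versus-upper indices $(\mu,\nu')\leftrightarrow(\mu,\nu)$, $(\mu',\nu)\leftrightarrow(\mu',\nu')$ visible in the statement), sends $A\mapsto A^{-1}$, and applies the coordinate change to the equivariant parameters. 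The normalization by the $\varnothing\varnothing/\varnothing\varnothing$ matrix element is flop-invariant for the same reason — it is the partition function with no external curves — so passing to $\tilde\sRR^{\PT}$ removes any overall framing/prefactor ambiguity introduced by the flop.

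Concretely I would carry this out as: (i) write each PT vertex as a sum of balanced rational functions in $x,y,z,\fq$ as in \eqref{eq:balanced-rational-function}, with poles at monomials $w_i(x,y,z)$; (ii) check that under $(x,y,z)\mapsto(x,xy,z/x)$ together with $\lambda\leftrightarrow\lambda^t$ on the summation variable and $A\mapsto A^{-1}$, the summand $A^{|\lambda|}V_{\mu\lambda\nu}(x,y,z)V_{\mu'\lambda^t\nu'}(yz,y^{-1},xy)E_\lambda(y,z,x)$ maps to the summand indexed by $\nu\leftrightarrow\nu'$ in the other expression, using that $\kappa$ is preserved (so balancedness is preserved) and that the transposition symmetry of the PT vertex $V_{\lambda\mu\nu}(x,y,z) \leftrightarrow V_{\lambda^t\mu^t\nu^t}$ under the appropriate variable permutation holds; (iii) re-sum over $\lambda$ and divide by the vacuum matrix element. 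The main obstacle I anticipate is bookkeeping in step (ii): the PT vertex is fully symmetric in its three legs only after the correct simultaneous permutation of the variables $x,y,z$, and the edge contributions carry their own framing factors, so matching the flopped four-point diagram summand-by-summand to the original requires carefully tracking which leg of each vertex is being flopped and verifying that the edge-contribution prefactors ($q^{\|\lambda\|^2/2}t^{\|\lambda^t\|^2/2}$-type terms, cf.\ the computation at the end of Section~\ref{sec:non-toric-gluings}) transform compatibly; this is the same delicate combinatorics encountered there, and I would reuse those computations. A secondary subtlety is that flop invariance of K-theoretic PT invariants, while expected and checked in examples, may itself depend on the conjectural framework already assumed throughout this section, so the conjecture would at best be reduced to that standard expectation rather than proved unconditionally — which is appropriate since it is stated as a Conjecture.
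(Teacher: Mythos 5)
The paper does not actually prove this statement: it is left as a Conjecture, and the paper's discussion after the statement is limited to recording known special cases (the Kononov--Okounkov computation when one pair of external legs is trivial, and the Awata--Kanno verification in the refined limit) and observing that the conjecture is an instance of the behaviour of PT invariants under flops. So there is no internal proof to compare against; what remains is to assess whether your sketch, if carried out, would establish the statement.

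It would not, and the obstruction is precisely in your step (ii). You propose to match the sum over $\lambda$ summand-by-summand under $\lambda \leftrightarrow \lambda^t$, $A\mapsto A^{-1}$, and the torus-coordinate change. But by definition $\sRR^{\PT}(A;x,y,z)^{\mu'\nu'}_{\mu\nu} = \sum_\lambda A^{|\lambda|}(\cdots)$ is a formal power series in $A$, so after the substitution the left-hand side $\tilde\sRR^{\PT}(A^{-1};\cdots)$ is a formal series in $A^{-1}$, while the right-hand side is a formal series in $A$. No relabeling of the index $\lambda$ can match a coefficient of $A^{-|\lambda|}$ on one side with a coefficient of $A^{|\lambda'|}$ on the other; these sit in different degrees, and the normalization by the $\varnothing\varnothing$ matrix element does not change this. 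The genuine content of flop invariance, as in the flop literature you cite and in the one-leg-trivial case already verified, is that both series are expansions, around $A=0$ and around $A=\infty$ respectively, of one and the same rational function of $A$; this rests on a rationality statement for the (four-point, capped-type) PT partition function in the K\"ahler variable which is emphatically not visible term by term in $\lambda$. If a summand-by-summand bijection existed, the conjecture would be an elementary identity and would not have been left open. Your framing of the statement as flop invariance applied to the internal edge of the four-point diagram is correct and agrees with the paper's own commentary, and you rightly anticipate that this at best reduces one conjecture to another; but the mechanism you propose for carrying out the reduction is exactly the step that fails, and it is where the difficulty lives.
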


The change of variables $(x,y,z) \mapsto (x, xy, z/x)$ is to ensure
that each of the four half-edges~$\mu$,~$\mu'$,~$\nu$,~$\nu'$ retains the
same weight. (Only the weight of the internal edge changes.)

This conjecture is known, by the explicit computation in
\cite{Kononov2021}, when either $\mu = \mu' = \varnothing$ or $\nu =
\nu' = \varnothing$, i.e., only one set of half-edges is non-trivial. In
general it is a question about the behavior of DT (or PT) invariants
under flops, and such general questions have been addressed
non-equivariantly, see, e.g., \cite{Calabrese2016}. In the refined
limit, the conjecture is known in full generality by explicit
computation~\cite{Awata2009}.
\end{MyParagraph}

\begin{MyParagraph}\label{sec:RR-matrix-YBE}
Recall that (trigonometric) R-matrices satisfy the Yang--Baxter
equation
\[ \sR^{(12)}(u) \sR^{(13)}(uv) \sR^{(23)}(v) = \sR^{(23)}(v) \sR^{(13)}(uv) \sR^{(12)}(u), \]
from which one obtains the RTT relation
\[ T^{(13)} T^{(12)} \sR^{(23)} = \sR^{(23)} T^{(13)} T^{(12)}, \qquad T \coloneqq (\sZ \otimes 1)\sR \]
for any operator $\sZ$ such that $[\sZ \otimes \sZ, \sR] = 0$.

\begin{Conjecture}[{\cite[Section~3]{Awata2016}}]
 \[ \sRR^H_{\sF,\sF \otimes \sF} \tilde \sR^{(23)} = \tilde \sR^{(23)} \sRR^H_{\sF,\sF \otimes \sF}, \]
 where
 $\tilde \sR \propto \sR_{\sF,\sF}(1) \in \End(\sF \otimes \sF)$ is a
 certain normalization of the R-matrix.
\end{Conjecture}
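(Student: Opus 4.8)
The plan is to deduce this ``RTT relation for $\sRR^H$'' from a braid-type compatibility between $\sRR^H$ and the R-matrix, exactly as the usual RTT relation is extracted from the Yang--Baxter equation. By the fusion prescription of Section~\ref{sec:RR-matrix-fusion} we have $\sRR^H_{\sF,\sF\otimes\sF} = (\sRR^H)^{(12)} B^{|\cdot|} (\sRR^H)^{(13)}$, and the grading operator $B^{|\cdot|}$, acting on the first tensor factor, commutes with $\tilde\sR^{(23)}$; the scalar in $\tilde\sR\propto\sR_{\sF,\sF}(1)$ is likewise irrelevant. So the conjecture reduces to showing that the composite $(\sRR^H)^{(12)}(\sRR^H)^{(13)}$, with the $B$-insertion carried along, intertwines $\tilde\sR$ acting on the two ``$V$-slots''. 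First I would make precise the remark already flagged in Section~\ref{sec:explicit-operator-formula}: through the vertical Fock representation $\Phi\colon U_{q,t}(\hhat{\lie{gl}}_1)\to\End(\sF^V)$, the vertex operators $\Gamma_\pm(z)$ of \eqref{eq:gamma-vertex-operators} are single-slope objects, so that $\sRR^H_{\sF,\sF}$, read on $\sF^V\otimes\sF$, is a composition of Awata--Feigin--Shiraishi (DIM) intertwiners of $U_{q,t}(\hhat{\lie{gl}}_1)$-modules, of the same type as the horizontal R-matrix of \cite{Negut2015}.

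With that identification, the second step invokes the characterization of $\sR_{\sF,\sF}$ as the unique-up-to-scalar operator satisfying $\sR\,\Delta(a) = \Delta^{\mathrm{op}}(a)\,\sR$, together with the hexagon-type relations (the AFS lemma) describing how the DIM intertwiners interact with the coproduct and the universal R-matrix. Fusing along the third factor, $\sRR^H_{\sF,\sF\otimes\sF}$ becomes a composition of intertwiners involving one copy of $\sF^V$ and two of $\sF$, and the claim is that sliding $\tilde\sR^{(23)}$ through this composition is permitted: each elementary intertwiner can be moved past $\tilde\sR^{(23)}$ at the cost of trading $\Delta$ for $\Delta^{\mathrm{op}}$ in the relevant slot, and tracking these trades through the whole composite returns the same operator, giving both orderings in the conjecture.

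The hard part will be that $\Gamma_\pm(z)$ genuinely entangles the horizontal and vertical subalgebras, so it is not immediate that $\sRR^H_{\sF,\sF\otimes\sF}$ is an honest intertwiner for a single comultiplication on the two $\sF$-factors; the auxiliary $\sF^V$-factor, where the vertical commutative subalgebra acts, must be threaded through the coproduct bookkeeping, and the relevant hexagon relation is a ``mixed'' one comparing a horizontal R-matrix on the $\sF$-slots with the vertical action on $\sF^V$. Pinning this mixed compatibility down is where I expect the real obstacle to lie. As a more computational but more robust fallback, one can attempt the conjecture directly from \eqref{eq:instanton-four-point-horizontal-operator-formula}, which writes $\sRR^H$ as a product of exponentials in $\alpha_{\pm n}$ and $H_{\pm n}$: combined with a comparable vertex-operator formula for $\sR_{\sF,\sF}(1)$ \cite{Negut2015}, the commutation reduces to a Wick-contraction identity of formal power series, checkable also entry-by-entry against the matrix elements \eqref{eq:instanton-RR-operator-matrix-elements} in the orthonormal basis $\{\cO^{\otimes}_{\vec\lambda}\}$. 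A third route, parallel to Section~\ref{sec:dependence-on-preferred-direction}, would lift to $\sRR^{\PT}$, identify $\sR_{\sF,\sF}$ with a local resolved-conifold PT-vertex gluing, and read the commutation as an equality of PT partition functions of two networks related by sliding a crossing past a four-point bubble -- which would then follow from independence of preferred direction (Theorem~\ref{thm:adjoint-matter-independence-of-preferred-direction}) and the flop-invariance Conjecture, granting the K-theoretic DT/PT correspondence.
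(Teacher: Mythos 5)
The statement you are trying to prove is, in the paper, an \emph{open conjecture}: the author does not prove it, but instead observes an elementary reduction. Namely, from the geometric construction of $\sR_{\sF,\sF}$ one has $\sR\,\cO_{(\lambda,\mu)} = \cO_{(\mu,\lambda)}$, so the commutation relation $\sRR^H_{\sF,\sF\otimes\sF}\tilde\sR^{(23)} = \tilde\sR^{(23)}\sRR^H_{\sF,\sF\otimes\sF}$ is equivalent to the symmetry
\[ \big\langle \cO_{\nu'} \otimes \cO_{(\lambda',\mu')} \big| \sRR^H_{\sF,\sF \otimes \sF} \big| \cO_{\nu} \otimes \cO_{(\lambda,\mu)}\big\rangle = \big\langle\cO_{\nu'} \otimes \cO_{(\mu',\lambda')} \big| \sRR^H_{\sF,\sF \otimes \sF} \big| \cO_{\nu} \otimes \cO_{(\mu,\lambda)}\big\rangle, \]
i.e.\ a swap symmetry of matrix elements of $\sRR^H$ in the generalized Macdonald basis $\{\cO_{\vec\lambda}\}$ --- which the paper explicitly states is ``still conjectural'' and only checked numerically to low order. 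So there is no proof in the paper to compare yours against.

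Your write-up is honest about being a plan rather than a proof, and it correctly identifies the central obstruction: the vertex operators $\Gamma_\pm(z)$ in \eqref{eq:instanton-four-point-horizontal-operator-formula} mix the horizontal and vertical subalgebras, so $\sRR^H$ is not a straightforward intertwiner for a single coproduct and the ``mixed hexagon'' compatibility you would need with the AFS/DIM intertwiner picture is not established anywhere. None of your three suggested routes (DIM-intertwiner/hexagon bookkeeping, direct Wick computation from the explicit vertex operator formula, or a PT-theoretic lift) is carried out, and each is flagged as having an obstacle or a conjectural dependency. What is notably absent from your proposal is the paper's one substantive observation: using $\sR\,\cO_{(\lambda,\mu)} = \cO_{(\mu,\lambda)}$ to strip the R-matrix away entirely and reduce the problem to a symmetry of a single operator's matrix elements. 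That reduction is more elementary and more directly testable than anything you propose, and would be the natural first step before attempting any of your heavier machinery --- in particular, your fallback ``Wick-contraction'' route would benefit from being phrased against the basis $\{\cO_{\vec\lambda}\}$, where the target identity becomes exactly this swap symmetry, rather than against $\{\cO^{\otimes}_{\vec\lambda}\}$ where the two sides are not simply related by $\tilde\sR$ acting as a transposition.
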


From the geometric construction of the R-matrix, one easily obtains
that $\sR \cO_{(\lambda,\mu)} = \cO_{(\mu,\lambda)}$. Hence, as noted
in \cite{Awata2016}, this conjecture reduces to the symmetry
\begin{equation*}
 \big\langle \cO_{\nu'} \otimes \cO_{(\lambda',\mu')} \big| \sRR^H_{\sF,\sF \otimes \sF} \big| \cO_{\nu} \otimes \cO_{(\lambda,\mu)}\big\rangle = \big\langle\cO_{\nu'} \otimes \cO_{(\mu',\lambda')} \big| \sRR^H_{\sF,\sF \otimes \sF} \big| \cO_{\nu} \otimes \cO_{(\mu,\lambda)}\big\rangle,
\end{equation*}
which, to the best of the author's knowledge, is still conjectural.
For example, it is apparently checked in~\cite{Morozov2016} up to
$O\big(Q^3,A^3\big)$.
\end{MyParagraph}

\subsection*{Acknowledgements}

This whole line of inquiry on $qq$-characters was inspired by similar
questions of A.~Okounkov, and benefitted greatly from his and N.
Nekrasov's lectures at the 2019 Skoltech Summer School on Mathematical
Physics. Discussions with N. Arbesfeld, C.-C.M.~Liu, A.~Okounkov, and
R.~Pandharipande were also important, particularly for the contents of
Section~\ref{sec:dependence-on-preferred-direction}. We also the anonymous
referees for bringing \cite{Fukuda2020} to our attention, and for
numerous suggestions that improved the content and exposition in this
paper.
This research was supported by the Simons Collaboration on Special
Holonomy in Geometry, Analysis and Physics.

\pdfbookmark[1]{References}{ref}
\LastPageEnding


\begin{thebibliography}{99}
\footnotesize\itemsep=0pt

\bibitem{Alexeev2002}
Alexeev V., Complete moduli in the presence of semiabelian group action,
 \href{https://doi.org/10.2307/3062130}{\textit{Ann. of Math.}} \textbf{155} (2002), 611--708,
 \href{https://arxiv.org/abs/math.AG/9905103}{arXiv:math.AG/9905103}.

\bibitem{Arbesfeld2021}
Arbesfeld N., K-theoretic {D}onaldson--{T}homas theory and the {H}ilbert scheme
 of points on a surface, \href{https://doi.org/10.14231/ag-2021-018}{\textit{Algebr. Geom.}} \textbf{8} (2021), 587--625,
 \href{https://arxiv.org/abs/1905.04567}{arXiv:1905.04567}.

\bibitem{Awata2012}
Awata H., Feigin B., Shiraishi J., Quantum algebraic approach to refined
 topological vertex, \href{https://doi.org/10.1007/JHEP03(2012)041}{\textit{J.~High Energy Phys.}} \textbf{2012} (2012),
 no.~3, 041, 35~pages, \href{https://arxiv.org/abs/1112.6074}{arXiv:1112.6074}.

\bibitem{Awata2009}
Awata H., Kanno H., Refined {BPS} state counting from {N}ekrasov's formula and
 {M}acdonald functions, \href{https://doi.org/10.1142/S0217751X09043006}{\textit{Internat.~J. Modern Phys.~A}} \textbf{24}
 (2009), 2253--2306, \href{https://arxiv.org/abs/0805.0191}{arXiv:0805.0191}.

\bibitem{Awata2013}
Awata H., Kanno H., Changing the preferred direction of the refined topological
 vertex, \href{https://doi.org/10.1016/j.geomphys.2012.10.014}{\textit{J.~Geom. Phys.}} \textbf{64} (2013), 91--110,
 \href{https://arxiv.org/abs/0903.5383}{arXiv:0903.5383}.

\bibitem{Awata2016}
Awata H., Kanno H., Mironov A., Morozov A., Morozov A., Ohkubo Y., Zenkevich
 {\relax Ye}., Toric {C}alabi--{Y}au threefolds as quantum integrable systems.
 {$\mathcal R$}-matrix and {$\mathcal{RTT}$} relations, \href{https://doi.org/10.1007/JHEP10(2016)047}{\textit{J.~High Energy
 Phys.}} \textbf{2016} (2016), no.~10, 047, 49~pages, \href{https://arxiv.org/abs/1608.05351}{arXiv:1608.05351}.

\bibitem{Bastian2018}
Bastian B., Hohenegger S., Iqbal A., Rey S.J., Triality in little string
 theories, \href{https://doi.org/10.1103/physrevd.97.046004}{\textit{Phys. Rev.~D}} \textbf{97} (2018), 046004, 28~pages,
 \href{https://arxiv.org/abs/1711.07921}{arXiv:1711.07921}.

\bibitem{Calabrese2016}
Calabrese J., Donaldson--{T}homas invariants and flops, \href{https://doi.org/10.1515/crelle-2014-0017}{\textit{J.~Reine Angew.
 Math.}} \textbf{716} (2016), 103--145, \href{https://arxiv.org/abs/1111.1670}{arXiv:1111.1670}.

\bibitem{Carlsson2014}
Carlsson E., Nekrasov N., Okounkov A., Five dimensional gauge theories and
 vertex operators, \href{https://doi.org/10.17323/1609-4514-2014-14-1-39-61}{\textit{Mosc. Math.~J.}} \textbf{14} (2014), 39--61,
 \href{https://arxiv.org/abs/1308.2465}{arXiv:1308.2465}.

\bibitem{Feigin2021}
Feigin B., Jimbo M., Mukhin E., Combinatorics of vertex operators and deformed
 $W$-algebra of type {${\rm D}(2, 1; \alpha)$}, \href{https://doi.org/10.1016/j.aim.2022.108331}{\textit{Adv. Math.}}
 \textbf{403} (2022), 108331, 54~pages, \href{https://arxiv.org/abs/2103.15247}{arXiv:2103.15247}.

\bibitem{Frenkel2001}
Frenkel E., Mukhin E., Combinatorics of $q$-characters of finite-dimensional
 representations of quantum affine algebras, \href{https://doi.org/10.1007/s002200000323}{\textit{Comm. Math. Phys.}}
 \textbf{216} (2001), 23--57, \href{https://arxiv.org/abs/math.QA/9911112}{arXiv:math.QA/9911112}.

\bibitem{Frenkel1999}
Frenkel E., Reshetikhin N., The {$q$}-characters of representations of quantum
 affine algebras and deformations of {$\mathcal W$}-algebras, in Recent
 Developments in Quantum Affine Algebras and Related Topics ({R}aleigh, {NC},
 1998), \textit{Contemp. Math.}, Vol.~248, \href{https://doi.org/10.1090/conm/248/03823}{Amer. Math. Soc.}, Providence, RI,
 1999, 163--205, \href{https://arxiv.org/abs/math.QA/9810055}{arXiv:math.QA/9810055}.

\bibitem{Fukuda2020}
Fukuda M., Ohkubo Y., Shiraishi J., Generalized {M}acdonald functions on {F}ock
 tensor spaces and duality formula for changing preferred direction,
 \href{https://doi.org/10.1007/s00220-020-03872-4}{\textit{Comm. Math. Phys.}} \textbf{380} (2020), 1--70, \href{https://arxiv.org/abs/1903.05905}{arXiv:1903.05905}.

\bibitem{Haiman1999}
Haiman M., Macdonald polynomials and geometry, in New Perspectives in Algebraic
 Combinatorics ({B}erkeley, {CA}, 1996--97), \textit{Math. Sci. Res. Inst.
 Publ.}, Vol.~38, Cambridge University Press, Cambridge, 1999, 207--254.

\bibitem{Iqbal2006}
Iqbal A., Kashani-Poor A.K., The vertex on a strip, \href{https://dx.doi.org/10.4310/ATMP.2006.v10.n3.a2}{\textit{Adv. Theor. Math.
 Phys.}} \textbf{10} (2006), 317--343, \href{https://arxiv.org/abs/hep-th/0410174}{arXiv:hep-th/0410174}.

\bibitem{Iqbal2010}
Iqbal A., Koz\c{c}az C., Shabbir K., Refined topological vertex, cylindric
 partitions and {$\rm U(1)$} adjoint theory, \href{https://doi.org/10.1016/j.nuclphysb.2010.06.010}{\textit{Nuclear Phys.~B}}
 \textbf{838} (2010), 422--457, \href{https://arxiv.org/abs/0803.2260}{arXiv:0803.2260}.

\bibitem{Iqbal2009}
Iqbal A., Koz\c{c}az C., Vafa C., The refined topological vertex,
 \href{https://doi.org/10.1088/1126-6708/2009/10/069}{\textit{J.~High Energy Phys.}} \textbf{2009} (2009), no.~10, 069, 58~pages,
 \href{https://arxiv.org/abs/hep-th/0701156}{arXiv:hep-th/0701156}.

\bibitem{Kac1990}
Kac V.G., Infinite-dimensional {L}ie algebras, 3rd ed., \href{https://doi.org/10.1017/CBO9780511626234}{Cambridge University
 Press}, Cambridge, 1990.

\bibitem{Kanazawa2019}
Kanazawa A., Lau S.C., Local {C}alabi--{Y}au manifolds of type $\tilde{A}$ via
 {SYZ} mirror symmetry, \href{https://doi.org/10.1016/j.geomphys.2018.12.015}{\textit{J.~Geom. Phys.}} \textbf{139} (2019), 103--138,
 \href{https://arxiv.org/abs/1605.00342}{arXiv:1605.00342}.

\bibitem{Katz1997}
Katz S., Klemm A., Vafa C., Geometric engineering of quantum field theories,
 \href{https://doi.org/10.1016/S0550-3213(97)00282-4}{\textit{Nuclear Phys.~B}} \textbf{497} (1997), 173--195,
 \href{https://arxiv.org/abs/hep-th/9609239}{arXiv:hep-th/9609239}.

\bibitem{Kononov2021}
Kononov Ya., Okounkov A., Osinenko A., The 2-leg vertex in {K}-theoretic {DT}
 theory, \href{https://doi.org/10.1007/s00220-021-03936-z}{\textit{Comm. Math. Phys.}} \textbf{382} (2021), 1579--1599,
 \href{https://arxiv.org/abs/1905.01523}{arXiv:1905.01523}.

\bibitem{Liu2021}
Liu H., Asymptotic representations of shifted quantum affine algebras from
 critical {K}-Theory, Ph.D.~Thesis, {C}olumbia University, 2021, available at
 \url{https://academiccommons.columbia.edu/doi/10.7916/d8-ynxy-5j49}.

\bibitem{Maulik2019}
Maulik D., Okounkov A., Quantum groups and quantum cohomology,
 \href{https://doi.org/10.24033/ast.1074}{\textit{Ast\'erisque}} \textbf{408} (2019), x+209~pages, \href{https://arxiv.org/abs/1211.1287}{arXiv:1211.1287}.

\bibitem{Morozov2016}
Morozov A., Zenkevich Y., Decomposing {N}ekrasov decomposition, \href{https://doi.org/10.1007/JHEP02(2016)098}{\textit{J.~High
 Energy Phys.}} \textbf{2016} (2016), no.~2, 098, 45~pages,
 \href{https://arxiv.org/abs/1510.01896}{arXiv:1510.01896}.

\bibitem{Nakajima2001}
Nakajima H., Quiver varieties and finite-dimensional representations of quantum
 affine algebras, \href{https://doi.org/10.1090/S0894-0347-00-00353-2}{\textit{J.~Amer. Math. Soc.}} \textbf{14} (2001), 145--238,
 \href{https://arxiv.org/abs/math.QA/9912158}{arXiv:math.QA/9912158}.

\bibitem{Negut2015}
Negu\c{t} A., Quantum algebras and cyclic quiver varieties, Ph.D.~Thesis,
 {C}olumbia University, 2015, available at
 \url{https://academiccommons.columbia.edu/doi/10.7916/D8J38RGF}.

\bibitem{Negut2017}
Negu\c{t} A., {AGT} relations for sheaves on surfaces, \href{https://arxiv.org/abs/1711.00390}{arXiv:1711.00390}.

\bibitem{Negut2018}
Negu\c{t} A., The $q$-{AGT}-{W} relations via shuffle algebras, \href{https://doi.org/10.1007/s00220-018-3102-3}{\textit{Comm.
 Math. Phys.}} \textbf{358} (2018), 101--170, \href{https://arxiv.org/abs/1608.08613}{arXiv:1608.08613}.

\bibitem{Negut2020}
Negu\c{t} A., The universal sheaf as an operator, \href{https://doi.org/10.48550/arXiv.2007.10496}{\textit{Math. Res. Lett.}}
 \textbf{28} (2021), 1793--1840, \href{https://arxiv.org/abs/2007.10496}{arXiv:2007.10496}.

\bibitem{Nekrasov2016}
Nekrasov N., BPS/CFT correspondence: non-perturbative {D}yson--{S}chwinger
 equations and $qq$-characters, \href{https://doi.org/10.1007/JHEP03(2016)181}{\textit{J.~High Energy Phys.}} \textbf{2016}
 (2016), no.~3, 181, 70~pages, \href{https://arxiv.org/abs/1512.05388}{arXiv:1512.05388}.

\bibitem{Nekrasov2017}
Nekrasov N., B{PS}/{CFT} correspondence~{II}: instantons at crossroads, moduli
 and compactness theorem, \href{https://doi.org/10.4310/ATMP.2017.v21.n2.a4}{\textit{Adv. Theor. Math. Phys.}} \textbf{21} (2017),
 503--583, \href{https://arxiv.org/abs/1608.07272}{arXiv:1608.07272}.

\bibitem{Nekrasov2016a}
Nekrasov N., Okounkov A., Membranes and sheaves, \href{https://doi.org/10.14231/AG-2016-015}{\textit{Algebr. Geom.}}
 \textbf{3} (2016), 320--369, \href{https://arxiv.org/abs/1404.2323}{arXiv:1404.2323}.

\bibitem{Oh2020}
Oh J., Thomas R., Counting sheaves on {C}alabi--{Y}au 4-folds,~I,
 \href{https://arxiv.org/abs/2009.05542}{arXiv:2009.05542}.

\bibitem{Okounkov2017}
Okounkov A., Lectures on {K}-theoretic computations in enumerative geometry, in
 Geometry of Moduli Spaces and Representation Theory, \textit{IAS/Park City
 Math. Ser.}, Vol.~24, \href{https://doi.org/10.1090/pcms/024}{Amer. Math. Soc.}, Providence, RI, 2017, 251--380,
 \href{https://arxiv.org/abs/1512.07363}{arXiv:1512.07363}.

\bibitem{Okounkov2016}
Okounkov A., Smirnov A., Quantum difference equation for {N}akajima varieties,
\href{https://doi.org/10.1007/s00222-022-01125-w}{\textit{Invent. Math.}} \textbf{229} (2022), 1203--1299,
 \href{https://arxiv.org/abs/1602.09007}{arXiv:1602.09007}.

\bibitem{Pandharipande2009}
Pandharipande R., Thomas R.P., The 3-fold vertex via stable pairs,
 \href{https://doi.org/10.2140/gt.2009.13.1835}{\textit{Geom. Topol.}} \textbf{13} (2009), 1835--1876, \href{https://arxiv.org/abs/0709.3823}{arXiv:0709.3823}.

\bibitem{Reshetikhin1989}
Reshetikhin N.{\relax Yu}., Takhtadzhyan L.A., Faddeev L.D., Quantization of
 {L}ie groups and {L}ie algebras, \textit{Leningrad Math.~J.} \textbf{1}
 (1990), 193--225.

\bibitem{Schiffmann2012}
Schiffmann O., Drinfeld realization of the elliptic {H}all algebra,
 \href{https://doi.org/10.1007/s10801-011-0302-8}{\textit{J.~Algebraic Combin.}} \textbf{35} (2012), 237--262,
 \href{https://arxiv.org/abs/1004.2575}{arXiv:1004.2575}.

\bibitem{Smirnov2014}
Smirnov A., Polynomials associated with fixed points on the instanton moduli
 space, \href{https://arxiv.org/abs/1404.5304}{arXiv:1404.5304}.

\end{thebibliography}
\end{document}